\documentclass[preprint,11pt]{amsart}
\usepackage{mathrsfs}
\usepackage{amsmath}
\usepackage{amssymb}

\usepackage{color}

\newtheorem{theorem}{Theorem}[section]
\newtheorem{lemma}[theorem]{Lemma}
\theoremstyle{definition}
\newtheorem{definition}[theorem]{Definition}
\newtheorem{example}[theorem]{Example}

\newtheorem{observation}[theorem]{Observation}
\theoremstyle{remark}
\newtheorem{remark}[theorem]{Remark}
\theoremstyle{corollary}
\newtheorem{corollary}[theorem]{Corollary}
\theoremstyle{proposition}
\newtheorem{proposition}[theorem]{Proposition}

\newcommand{\R}{\mathbb{R}}
\newcommand{\Cn}{\mathbb{C}^n}
\newcommand{\C}{\mathbb{C}}

\newcommand{\Z}{\mathbb{Z}}

\newcommand{\abs}[1]{\left|#1\right|}
\newcommand{\norm}[1]{\left\|#1\right\|}
%
%


\begin{document}

\title{An algebra of polyanalytic functions}
\author{Abtin Daghighi} 
\author{Paul M. Gauthier} 

%


%
\subjclass[2010]{46J15,30G30,35G05,46E25}
\keywords{Polyanalytic, $q$-analytic, Banach space, Uniform algebra}
%

\begin{abstract}
The most important uniform algebra is the family of continuous functions on a compact subset $K$ of the complex plane $\C$ which are analytic on the interior $int(K).$ For compact sets $K$ which are {\em regular} (i.e. $K=\overline{int(K)}$ and for polyanalytic functions, we introduce analogous spaces, which are Banach spaces with respect to the sup-norm, but are not closed with respect to the usual pointwise multiplication. We shall introduce a multiplication on these spaces and investigate the resulting algebras.
\end{abstract}

\maketitle

\section{Introduction}
 
Function algebras, in particular in  the context of Banach and uniform algebras, have many useful applications in the theory of analytic functions, see e.g.\ Alexander \& Wermer \cite{wermer}, Gamelin \cite{gamelinbok}, Stout \cite{stout} and Browder \cite{Browderbok}. 
There currently do not exist natural algebra structures
for function spaces that include only polyanalytic functions of a fixed finite order. 
The purpose of this paper is to introduce a multiplication that allows for the construction of an algebra of $q$-analytic functions, for any fixed $q\in \Z_+,$ closed with respect to the sup-norm.
For $q=1$ this reduces precisely to the case of analytic functions.
The procedure will naturally give rise to a generalized notion of uniform algebras. The main result of the paper is the verification of the fact that 
we in fact obtain an algebra which is also a Banach space with respect to the sup-norm. This we do in Section 2 and, following it, Section 3 and Section 4 deal with the associated generalization of uniform algebras establishing some analogues to those from the standard theory of complex function algebras.
\\
\\
In this paper, all vector spaces considered will be complex vector spaces. By a continuous function on a subset of a vector space, we mean a continuous  complex-valued function. When we speak of an analytic mapping from an open subset of one vector space to another vector space, we mean a complex-analytic mapping. 

\begin{definition} 
Let $\Omega\subseteq\C$ be an open subset. A complex function $f$ is called {\em $q$-analytic}
on $\Omega$ if it satisfies $\partial_{\bar{z}}^q f=0$ on $\Omega$ in the weak sense.
It follows that $q$-analyticity is a local property. 
We can equivalently (see e.g.\ Balk \cite{balk}) define $f$ to 
be $q$-analytic
on $\Omega$ if 
$f$ can be represented in the form
\begin{equation}\label{eq1}
	f(z)=\sum_{j=0}^{q-1}a_j(z)\bar{z}^j, \quad z\in \Omega,
\end{equation} 
where $a_j, j=0,\ldots , q-1,$ are analytic functions on $\Omega.$  
The functions $a_j$ are uniquely determined by $f$ and
are called the {\em analytic components} (or {\em holomorphic components}) of $f$.
We denote by $\mbox{PA}_q(\Omega)$ the set of $q$-analytic functions on $\Omega.$

Let us say that a function $f$ defined on an open set $\Omega$ is polyanalytic if it is $q$-analytic for some $q = 1,2,\ldots$ and let us denote by 
$$ 
	\mbox{PA}(\Omega) = \bigcup_{q=1}^\infty \mbox{PA}_q(\Omega)
$$
the family of polyanalytic functions on $\Omega.$ 

We shall say that a function $f$ defined on an open set  $\Omega$ is  {\em countably polyanalytic} (CA) at a point $a$ if, in some neighbourhood $U(a),$ it is representable as a uniformly convergent series
$$
	f(z) = \sum_{k=0}^\infty h_k(z:a)(\overline z-\overline a)^k,  
$$ 
where all $h_k$ are analytic in $U(a).$ Moreover, we shall say that $f$ is  countably polyanalytic on $\Omega$ if it is  countably polyanalytic at each point of $\Omega.$ We denote by $CA(\Omega)$ the space of all countably  polyanalytic functions on $\Omega.$ For a general introduction to polyanalytic functions, see Balk \cite{balk}. 

We have 
$$
\mbox{PA}_q(\Omega) \subset \mbox{PA}(\Omega) \subset CA(\Omega).
$$


If $f\in CA(\Omega),$ we may consider $f$ as a function of two complex variables $z$ and $w$ in a natural manner. For $\Omega^*=\{\overline z:z\in \Omega\}$ consider the open set $\Omega\times\Omega^*\subset \C^2$ and let $G=G_\Omega\subset \Omega\times\Omega^*$ be the graph of the function $w=\overline z$ over the set $\Omega.$ To $f\in CA(\Omega),$ we shall associate  a function $F(z,w)$ holomorphic in a neighbourhood of  $G_\Omega.$ For each $a\in \Omega,$ there is an open disc $U_a$ centered at $a$ in which $f$ has a representation (see \cite{balk})
$$
	f(z)=\sum_{k,\nu=0}^\infty c_{k,\nu}(\overline z-\overline a)^k(z-a)^\nu, \quad c_{k,\nu}\in\C.
$$
It follows from Abel's theorem on double series that 
$$
	\sum_{k,\nu=0}^\infty c_{k,\nu}(w-\overline a)^k(z-a)^\nu
$$
converges to a function $F_a(z,w)$ holomorphic  in the polydisc $U_a\times U_a^*,$ which coincides with $f$ on $G_{U_a}.$  That is, $F_a(z,\overline z)=f(z),$ for $z\in U_a.$ We wish to define a function $F$ on the union of the polydiscs $U_a\times U_a^*, a\in \Omega$ by setting $F=F_a$ on each such polydisc. We must show that the function $F$ is well-defined. The intersection of two such polydiscs is the polydomain 
$(U_a\cap U_b)\times(U_a\cap U_b)^*$ which we abbreviate as $V\times V^*.$ The function $F_a-F_b$ is holomorphic on this polydomain and vanishes on the graph $G_V$ of $w=\overline z$ over $V.$ 

{\em Claim.} For an arbitrary domain $V\subset \C,$ every function $\Phi,$ holomorphic on a domain $W\subset\C^2$ containing the graph $G_V$ of $w=\overline z$ over $V,$ vanishes identcially on $W.$  

\begin{proof}
The zero set $Z$ of $\Phi$ is a complex analytic set of dimension at least $1.$ Suppose, to obtain a contradiction that the dimension of $Z$ is $1.$ Then the set of irregular points of $Z$ is of dimension zero and since $G_V$ is of dimension $1$ there is a regular point $(z_0,\overline z_0)$ in $Z\cap G_V.$ 
Let $X$ be the component of the regular set of $Z$ containing $(z_0,\overline z_0).$ For simplicity, we may assume that $z_0=0.$ Thus, $X$ is a complex submanifold (not necessarily closed) of $\C^2$ containing the origin and the graph $w=\overline z$ near the origin. Since $w=\overline z$ is a real linear subspace of $\C^2,$ the real tangent space of $X$ contains $w=\overline z$ and in particular the vector $(i,-i).$ But since $X$ is a complex submanifold its complex tangent space coincides with its real tangent space, so  $T_{\R}X_0=T_{\C}X_0$ contains the vector $(i,-i).$ It also contains $i$ times  this vector, which is $(-1,1).$ This is not in the linear subspace $w=\overline z$ of  $T_{\R}X_0$ so the latter is of real dimension at least $3.$ Again, since $T_{\R}X_0=T_{\C}X_0$ it folows that $T_{\C}X_0$ is of dimension $2$ so $X$ is of dimension $2$ at $0,$ which contradicts our assumption that $Z$ is of dimension $1.$ Thus, the zero set $Z$ of $\Phi$ is of dimension $2$ and so $\Phi$ vanishes identically.  
\end{proof} 

It follows from the claim that $F_a-F_b$ is identically zero  on the domain $V\times V^*.$
That is, $F_a-F_b$ is zero on  the intersection of the  two polydiscs. Thus, $F$ is well-defined on the union of all the polydiscs $U_a\times U_a^*, a\in\Omega.$  The function $F$ is holomorphic in a neighbourhood of the graph $G_\Omega$ and is equal to $f$ on $G_\Omega.$ That is, $F(z,\overline z)=f(z)$ for $z\in\Omega.$ The same uniqueness argument shows that if $F_1$ and $F_2$ are two holomorphic functions on neighbourhoods of $G_\Omega$ which coincide with $f$ on $G_\Omega,$ then $F_1=F_2$ on some neighbourhood of $G_\Omega.$ In this sense we may identify the space $CA(\Omega)$ with (germs of) functions holomorphic on the graph $G_\Omega$ over $\Omega$ of the function $w=\overline z$ in $\C^2.$ 

For the particular case that $f\in\mbox{PA}_q(\Omega)$ we can be more precise.  Since $f$ has the representation (\ref{eq1}) the function 
$$
	F(z,w)=\sum_{j=0}^{q-1}a_j(z)w^j, \quad z\in \Omega\times\C,
$$
if $X$ is an open connected neighbourhood (in $\Omega\times\C$) of the  graph $G_\Omega$ of $w=\overline z$ over $\Omega$  then $F$ is 
the unique holomorphic function on $X$ which coincides with $f$ on  $G_\Omega.$


With pointwise multiplication, the spaces $\mbox{PA}(\Omega)$ and $CA(\Omega)$ are algebras over the complex numbers $\C.$  The spaces $\mbox{PA}_q(\Omega)$ are not closed under usual pointwise multiplication. 

\end{definition}

\begin{definition} 
Let $X\subset\C$ be a compact subset.  We denote by $C^0(X)$ the Banach space of continuous functions on $X$ equipped with the sup-norm. For $q\in \Z_+,$ we denote by $P_q(X)$ the set of uniform limits on $X$, of continuous functions on $X$ that are $q$-analytic functions  on the interior $\mbox{int}(X)$ with polynomial analytic coefficients.
We denote by $A_q(X)$ the set of continuous functions on $X$ that are $q$-analytic functions on $\mbox{int}(X)$ (so if $X=\Omega$ is an open subset of $\C$ we have $\mbox{PA}_q(\Omega)=A_q(\Omega)$). 
In particular, for a subset $X\subset\Cn$ we denote by $A_1(X)$ the set of continuous functions on $X$ that are analytic on  $\mbox{int}(X).$ 

From Proposition \ref{balk206} we see that $A_q(X)$ is closed in  $C^0(X).$ We have the following Banach space inclusions: 
$$
		P_q(X) \subset A_q(X) \subset C^0(X).
$$

\end{definition}

\begin{definition}
	Given a $q$-analytic function $f=\sum_{j=0}^{q-1}a_j(z)\bar{z}^j,$
	we define the {\em exact order of $f$}
	to be the nonnegative integer $\max \{ j\colon a_j\not\equiv 0\}.$
\end{definition}

If $q\in \Z_+,$ and $f,g$ are two $q$-analytic functions on a domain $\Omega,$ such that the order $q$ is exact for both $f$ and $g$,
then $\partial_{\bar{z}}^q (fg)\equiv 0$ if and only if $q=1.$ Hence, for the case of $q>1$ we do not
have algebras of $q$-analytic functions. 

\begin{definition}
A {\em Banach algebra} is a commutative $\C$-algebra, $A$, with identity (a so called unital algebra)
that at the same time is a complex Banach space with respect to a norm $\norm{\cdot}$ that satisfies $\norm{xy}\leq \norm{x}\norm{y}$
for all $x,y\in A,$ and $\norm{1}=1$ (where $1$ in the left hand side denotes the identity in $A$ and $1$ in the right hand side denotes the 
identity in $\C$).
\end{definition}

\begin{definition}[See e.g.\ Stout \cite{stout}, Def. 1.2.6]
A {\em uniform algebra} on a compact Hausdorrf space $X$ is a uniformly closed, point-separating subalgebra with identity (so-called unital) of the algebra $C^0(X).$
\end{definition}
Recall that if $X$ is a compact Hausdorff space we say that
a subalgebra $A$ of $C^0(X)$
separates the points of $X$ if for any $p_1,p_2\in X$, with $p_1\neq p_2$, there exists $f\in A$ such
that $f(p_1)\neq f(p_2).$ 
Recall also that the complex version of the Stone-Weierstrass theorem (for denseness
in $C^0(X)$, in the norm topology)
includes the condition that the algebra $A$ satisfy 
$f\in A\Rightarrow \bar{f}\in A$). 
The standard example given by the uniform limits on $\{\abs{z}\leq 1\}$ of complex polynomials 
(which contains the constant $1$ and separates the points of $\{\abs{z}\leq 1\}$) 
does not satisfy $f\in A\Rightarrow \bar{f}\in A$ (indeed the uniform limit must be analytic on the interior of the disc). 
Similarly, the complex version of the Stone-Weierstrass theorem
is inapplicable to any kind of algebra that would contain a $q$-analytic function with a nonpolynomial analytic component but not its conjugate (which would of course not be a $k$-analytic function for any $k\in \Z_+$). 
We shall introduce a multiplication that allows for the construction of an algebra of $q$-analytic functions, for any fixed $q\in \Z_+,$ closed with respect to the sup-norm.
To do this we need some preparatory material from Balk \cite{balk}.

   \begin{proposition}[See Balk \cite{balk}, Cor. 4, p.206]\label{balk206}
     Let $\Omega\subset\C$ be a domain and let $q\in \Z_+.$
     If a sequence $\{f_j\}_{j\in \Z_+}$ of $q$-analytic functions converges locally uniformly inside $\Omega$
     then the limit function is also $q$-analytic in $\Omega.$
     \end{proposition}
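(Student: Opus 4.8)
The plan is to work with the weak (distributional) form of $q$-analyticity recorded in the defining Definition, namely $\partial_{\bar{z}}^q f = 0$, rather than with the representation \eqref{eq1}. The reason is that local uniform convergence of $\{f_j\}$ gives no direct control over the derivatives of the $f_j$, whereas the distributional formulation lets one transfer all $q$ derivatives off the functions and onto a fixed test function, where they cause no trouble.

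First I would record the elementary consequences of the hypothesis: if $\{f_j\}$ converges locally uniformly to a limit $f$, then $f$ is continuous on $\Omega$, hence lies in $L^1_{\mathrm{loc}}(\Omega)$, so that the distributional pairing below is meaningful; moreover $f_j \to f$ uniformly on every compact subset of $\Omega$, in particular on the support of any prescribed test function.

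Next, I would fix $\varphi \in \tf(\Omega)$ and integrate by parts $q$ times, with no boundary contributions since $\varphi$ has compact support, to obtain for each $j$
\[
	\int_\Omega (\partial_{\bar{z}}^q f_j)\,\varphi\, dA = (-1)^q \int_\Omega f_j\,(\partial_{\bar{z}}^q \varphi)\, dA,
\]
where $dA$ denotes planar Lebesgue measure. Since each $f_j$ is $q$-analytic, the left-hand side vanishes for every $j$. Because $\partial_{\bar{z}}^q \varphi$ is a fixed continuous function supported on a compact set $K \subset \Omega$ and $f_j \to f$ uniformly on $K$, I may pass to the limit under the integral on the right, obtaining $\int_\Omega f\,(\partial_{\bar{z}}^q \varphi)\, dA = 0$. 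As $\varphi$ was arbitrary, this is exactly the statement that $\partial_{\bar{z}}^q f = 0$ in the weak sense, i.e.\ that $f$ is $q$-analytic on $\Omega$.

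I expect the only point requiring care to be the interchange of limit and integral, which is immediate from uniform convergence on the compact set $K$ together with the boundedness of $\partial_{\bar{z}}^q \varphi$ and the finiteness of the area of $K$. The conceptual merit of the distributional route is precisely that it sidesteps the main obstacle of the naive approach: one cannot recover the analytic components $a_j$ of \eqref{eq1} from $f$ without differentiating, and locally uniform convergence of $\{f_j\}$ does not by itself yield convergence of $\partial_{\bar{z}}^{q-1} f_j$ and the lower-order derivatives. An alternative, more classical route would instead extract the $a_j$ from the associated holomorphic functions $F_j(z,w)$ and invoke the Weierstrass theorem on uniform limits of analytic functions for each component; this is viable but distinctly more cumbersome than the one-line distributional argument above.
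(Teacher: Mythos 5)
Your argument is correct and complete. Note that the paper itself offers no proof of this proposition: it is quoted from Balk as a known result. Given the paper's own definition of $q$-analyticity as the weak-sense equation $\partial_{\bar{z}}^q f=0$, your distributional argument is the natural and economical one: the hypothesis $\partial_{\bar{z}}^q f_j=0$ in the weak sense is precisely the identity $\int_\Omega f_j\,\partial_{\bar{z}}^q\varphi\,dA=0$ for all $\varphi\in\tf(\Omega)$ (so the explicit integration by parts you perform is either classical, since $q$-analytic functions are smooth by the representation \eqref{eq1}, or simply redundant), and uniform convergence on the compact support of $\varphi$ justifies the passage to the limit. One small point worth making explicit is that the limit $f$, being a locally uniform limit of continuous functions, is continuous and hence defines a distribution, so the weak formulation applies to it. Your closing remark is also apt: the alternative componentwise route (recovering the $a_j$ via derivatives and invoking Weierstrass) is essentially what the paper's Proposition \ref{balk206b} and Corollary \ref{ahernbrunakonsekvens} carry out later, and it requires the additional derivative-convergence input that your argument avoids.
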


       \begin{proposition}[See Balk \cite{balk}, Cor. 5, p.206]\label{balk206b}
          Let $\Omega\subset\C$ be a domain and let $q\in \Z_+.$
          If a sequence $\{f_j\}_{j\in \Z_+}$ of $q$-analytic functions converges locally uniformly to $f$ inside $\Omega$
          then the any pair of nonnegative integers $k,l$
           $\partial_{\bar{z}}^k\partial_z^l f_j(z)$ converges locally uniformly to 
           $\partial_{\bar{z}}^k\partial_z^l f(z)$ on $\Omega.$
          \end{proposition}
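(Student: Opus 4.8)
The plan is to reduce the statement to the local uniform convergence of the analytic components and then to invoke the classical Weierstrass theorem for analytic functions. By Proposition \ref{balk206} the limit $f$ is itself $q$-analytic, so I would write $f=\sum_{m=0}^{q-1}a_m(z)\bar z^m$ and $f_j=\sum_{m=0}^{q-1}a_{j,m}(z)\bar z^m$ with $a_m,a_{j,m}$ analytic. Since $\partial_z\bar z^m=0$, $\partial_{\bar z}a_m=0$, $\partial_{\bar z}\bar z^m=m\bar z^{m-1}$ and $\partial_z a_m=a_m'$, a direct computation gives the explicit formula
\begin{equation*}
\partial_{\bar z}^{k}\partial_z^{l}f=\sum_{m=k}^{q-1}\frac{m!}{(m-k)!}\,a_m^{(l)}(z)\,\bar z^{m-k},
\end{equation*}
and the analogous formula for each $f_j$. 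Because $\bar z^{m-k}$ is a fixed locally bounded function, it therefore suffices to prove the single assertion that $a_{j,m}\to a_m$ locally uniformly on $\Omega$ for every $m$: once this is known, the Weierstrass theorem (equivalently, Cauchy's estimates) yields $a_{j,m}^{(l)}\to a_m^{(l)}$ locally uniformly, and the finite sum above converges locally uniformly as required.

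The core of the argument will be to recover the analytic components from the function values in a way that is continuous under local uniform convergence; note that a naive extraction by differentiation would be circular. I would fix $z_0\in\Omega$ and, after a translation (which preserves the $q$-analytic structure), assume $z_0=0$. On a circle $|z|=r$ contained in $\Omega$ we have $\bar z=r^2/z$, so that
\begin{equation*}
f(z)=\sum_{m=0}^{q-1}a_m(z)\,r^{2m}z^{-m},\qquad |z|=r.
\end{equation*}
Writing $a_m(z)=\sum_{\nu\ge0}\alpha_{m,\nu}z^\nu$ and extracting the coefficient of $z^{n}$ by the contour integral $L_n(r)=\frac1{2\pi i}\oint_{|z|=r}f(z)z^{-n-1}\,dz$, one finds that, for each fixed $n$, $L_n(r)$ is a polynomial in $r^2$ whose coefficients are precisely the Taylor coefficients $\alpha_{m,\nu}$ with $\nu-m=n$. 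Evaluating $L_n$ at $q$ distinct radii then produces an invertible Vandermonde system, so each $\alpha_{m,\nu}$ becomes an explicit finite linear combination of the numbers $L_n(r_i)$.

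This linear-algebra device is what makes the recovery continuous. Each $L_n(r)$ is a continuous linear functional of the restriction $f|_{\{|z|=r\}}$ with respect to the sup-norm, so $f_j\to f$ uniformly on the circles forces $L_n^{(j)}(r_i)\to L_n(r_i)$ and hence $\alpha_{m,\nu}^{(j)}\to\alpha_{m,\nu}$ for all $m,\nu$. Moreover the standard estimate $|L_n(r)|\le M r^{-n}$, with $M$ a uniform bound for the $f_j$, together with the fixed inverse Vandermonde matrix, gives bounds $|\alpha_{m,\nu}^{(j)}|\le C\rho^{m-\nu}$ independent of $j$ on a slightly smaller disc; splitting each power series into a finite head and a geometric tail then upgrades the coefficientwise convergence to local uniform convergence of $a_{j,m}\to a_m$. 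Covering $\Omega$ by such discs establishes the single assertion of the first paragraph and completes the proof.

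I expect the main obstacle to be exactly this continuity of component extraction: the components encode information \emph{off} the diagonal $w=\bar z$, whereas local uniform convergence only controls $f$ \emph{on} the diagonal, so one must argue that the off-diagonal data is stably determined. In the $\C^2$ picture developed above this is the same as showing that the holomorphic extensions $F_j(z,w)=\sum_m a_{j,m}(z)w^m$ converge near the graph $G_\Omega$. The circle identity $\bar z=r^2/z$ combined with the Vandermonde inversion is the mechanism that resolves it, and obtaining the $j$-independent coefficient bounds is the point that must be handled with care in order to pass from convergence of individual Taylor coefficients to genuine local uniform convergence of the components.
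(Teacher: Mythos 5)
The paper gives no proof of this proposition --- it is quoted from Balk (Cor.~5, p.~206) --- so there is nothing internal to compare against, and your argument must stand on its own; it does. The reduction, via the explicit formula $\partial_{\bar z}^{k}\partial_z^{l}f=\sum_{m\ge k}\frac{m!}{(m-k)!}a_m^{(l)}(z)\bar z^{m-k}$, to the single claim that the analytic components converge locally uniformly is correct, and you have identified the genuine difficulty exactly: the components must be recovered from the values of $f$ by a procedure that is continuous in the sup-norm, and recovering them by differentiation would be circular (indeed Corollary~\ref{ahernbrunakonsekvens} in the paper extracts components by differentiating, using this very proposition). Your mechanism --- on $|z|=r$ one has $\bar z=r^2/z$, so the $n$-th Laurent/Fourier coefficient $L_n(r)$ of $f$ on the circle is a polynomial of degree at most $q-1$ in $r^2$ with coefficients $\alpha_{m,n+m}$, and evaluation at $q$ distinct radii inverts by a fixed Vandermonde matrix --- is sound and is essentially the classical concentric-circles argument underlying Balk's own treatment; the upgrade from coefficientwise convergence to locally uniform convergence of $a_{j,m}$ via the $j$-independent geometric bounds and a head/tail split is also correct. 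Two points you should make explicit in a final write-up: the radii must be chosen so that the largest closed circle lies in $\Omega$ (so the recovered series a priori converge only on the disc of radius $\min_i r_i$), and the disc-by-disc conclusions patch into locally uniform convergence on all of $\Omega$ because the representation $f=\sum_m a_m(z)\bar z^m$ is unique. A shorter alternative would be to note that $\partial_{\bar z}^{q}$ is hypoelliptic and invoke interior elliptic estimates, but your elementary route has the advantage of yielding the component convergence explicitly, which is what the rest of the paper actually uses.
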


          A consequence is the following.

          \begin{corollary}\label{ahernbrunakonsekvens}
          Let $K\subset \C$ be a regular compact set  and let $q\in \Z_+.$
          Let $\{f_j\}_{j\in \Z_+}$ be a sequence of continuous functions on the closure $K$ such 	that 
          each $f_j$ is of the form $f_j(z)=\sum_{k=0}^{q-1}\bar{z}^k a_{j,k}(z),$
          $a_{j,k}\in A_1(K)$ (here $\bar{z}^j$ denotes the restriction of $\bar{z}^j$ to $K$) and
          satisfying $f_j\to f$ uniformly on $K$. Then $f$ is a
          continuous function on $K$, $q$-analytic on $\Omega,$ and has a representation of the form  $f(z)=\sum_{k=0}^{q-1}\bar{z}^k g_{k}(z).$ This representation is unique and 
          $g_{k}\in A_1(K)$, $k=0,\ldots,q-1.$
          \end{corollary}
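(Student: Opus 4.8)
The plan is to verify the four assertions in turn, leaving the real work for the last. Continuity of $f$ is immediate, since a uniform limit on $K$ of continuous functions is continuous. For $q$-analyticity I would use that $q$-analyticity is a local property, so it suffices to argue on each connected component of $\mbox{int}(K)$: there the $f_j$ are $q$-analytic and converge locally uniformly to $f$, and Proposition \ref{balk206} gives that $f$ is $q$-analytic on $\mbox{int}(K)$. Hence $f$ admits on $\mbox{int}(K)$ a representation $f=\sum_{k=0}^{q-1}\bar z^k g_k$ with $g_k$ holomorphic there, and these components are unique. Uniqueness on all of $K$ then follows from regularity: if $\sum\bar z^k g_k=\sum\bar z^k g_k'$ on $K$ with all components in $A_1(K)$, then the components agree on $\mbox{int}(K)$ by interior uniqueness, and since $K=\overline{\mbox{int}(K)}$ and the components are continuous, they agree on $K$.

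Next I would identify the limiting components and track their convergence. From the identity $\partial_{\bar z}^{m}f=\sum_{k\ge m}\frac{k!}{(k-m)!}\,g_k\,\bar z^{\,k-m}$ one reads off $g_{q-1}=\frac{1}{(q-1)!}\partial_{\bar z}^{q-1}f$, and then recovers $g_{q-2},\dots,g_0$ by downward recursion from the iterated $\partial_{\bar z}$-derivatives of $f$ together with the components already found; the same formulas express the $a_{j,k}$ through the $\partial_{\bar z}$-derivatives of $f_j$. By Proposition \ref{balk206b} the derivatives $\partial_{\bar z}^{m}f_j$ converge locally uniformly on $\mbox{int}(K)$ to $\partial_{\bar z}^{m}f$, so by the recursion $a_{j,k}\to g_k$ locally uniformly on $\mbox{int}(K)$ for every $k$. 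In particular each $g_k$ is a locally uniform limit, on the interior, of functions in $A_1(K)$.

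The main obstacle, and the real content of the statement, is to upgrade this to the conclusion that each $g_k$ lies in $A_1(K)$, i.e.\ extends continuously to all of $K$. The naive hope, that the $a_{j,k}$ are uniformly Cauchy on $K$ and hence converge in the closed space $A_1(K)$, is \emph{not} available: uniform convergence of the $f_j$ does not force uniform convergence of the components. Indeed the injective linear map $(a_0,\dots,a_{q-1})\mapsto\sum_k\bar z^k a_k$ from $A_1(K)^{q}$ into $C^0(K)$ need not be bounded below, so an estimate of the form $\|a_{j,k}-a_{l,k}\|_{C^0(K)}\le C\,\|f_j-f_l\|_{C^0(K)}$ cannot be expected in general; the locally uniform interior convergence must therefore be combined with genuine boundary information.

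To close this gap I would attempt a boundary-regularity argument for the components. By the maximum modulus principle it suffices to control each $g_k$ near $\partial K$, and one natural device is a Cauchy/contour extraction of $g_k$ from the continuous data $f$: for $z_0\in\mbox{int}(K)$ and a circle of radius $\rho$ inside $\mbox{int}(K)$,
\[
  g_{q-1}(z_0)=\frac{1}{\rho^{q-1}}\cdot\frac{1}{2\pi}\int_{0}^{2\pi} f\bigl(z_0+\rho e^{i\theta}\bigr)\,e^{i(q-1)\theta}\,d\theta ,
\]
with analogous formulas for the lower components, and then one would analyse the limit as $z_0\to\partial K$. The difficulty is that these extraction formulas degenerate as $\rho\to0$ near the boundary, so producing a continuous extension of $g_k$ up to $\partial K$ is precisely the delicate step; I expect this boundary continuity of the components, rather than any of the earlier steps, to be where the proof must do its real work, and where the regularity of $K$ together with Ahern--Bruna-type boundary estimates for polyanalytic functions would have to enter.
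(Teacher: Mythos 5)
Your proposal tracks the paper's own argument almost line for line up to its last step: continuity of $f$ from uniform convergence, $q$-analyticity on $\mbox{int}(K)$ from Proposition \ref{balk206}, identification of the top component as $g_{q-1}=\frac{1}{(q-1)!}\partial_{\bar z}^{q-1}f$ with downward recursion for the lower ones, locally uniform convergence $a_{j,k}\to g_k$ on $\mbox{int}(K)$ via Proposition \ref{balk206b}, and uniqueness of the representation by density of $\mbox{int}(K)$ in $K$. All of that is correct and is exactly what the paper does.

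The genuine gap is that you never prove the remaining (and decisive) assertion that each $g_k$ extends continuously to $K$, i.e.\ $g_k\in A_1(K)$. You correctly observe that a locally uniform limit on $\mbox{int}(K)$ of functions in $A_1(K)$ need not lie in $A_1(K)$, you note that your Cauchy-type extraction formula carries a factor $\rho^{-(q-1)}$ that degenerates as $z_0$ approaches $\partial K$, and you then stop, deferring to unspecified Ahern--Bruna-type boundary estimates. Without that step the corollary is not proved, since $g_k\in A_1(K)$ is its entire content (it is what makes $\mbox{PA}_q(K)$ closed in $C^0(K)$). Note also an internal tension in your write-up: by the open mapping theorem applied to the injective bounded map $(a_0,\dots,a_{q-1})\mapsto\sum_k\bar z^k a_k$ from $A_1(K)^q$ into $C^0(K)$, closedness of the range is \emph{equivalent} to the lower bound $\max_k\|a_k\|_K\le C\,\|\sum_k\bar z^k a_k\|_K$; so your remark that such an estimate ``cannot be expected in general'' would, if true, refute the statement you are trying to prove. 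For comparison, the paper itself dispatches this step very briefly: it observes that each $\partial_{\bar z}^{q-1}f_j=(q-1)!\,a_{j,q-1}$ has a continuous extension to $K$ and then asserts that the normal limit $g_{q-1}$ has one as well, iterating downward; it does not supply the quantitative boundary estimate you are looking for either. So you have correctly isolated the delicate point of the argument, but your proposal neither completes it nor reproduces the paper's (terse) treatment of it.
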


          \begin{proof}
          That $f$ is continuous is immediate from the fact that $C^0(K)$ is closed under the sup-norm. 
          That $f$ is $q$-analytic on $\mbox{int}(K),$ follows from Proposition \ref{balk206}.

 By Proposition \ref{balk206b}
          we have for each $m=0,\ldots,q-1,$
          \begin{equation}\label{crocat}
          \partial_{\bar{z}}^m f_j\to \partial_{\bar{z}}^m f  \quad \mbox{locally uniformly on } \quad \mbox{int}(K).
          \end{equation}
          Also note that the fact that each of the analytic components $a_{j,k}(z)$ of $f_j$ has (since it belongs to $A_1(K)$ by definition) a continuous extension to $K,$ implies that the same holds true
          for $\partial_{\bar{z}}^m f_j=\sum_{k=0} a_{j,k}(z)(\partial_{\bar{z}}^m \bar{z}^k).$
In particular the normal limit
          $\frac{1}{(q-1)!}\partial_{\bar{z}}^{q-1} f_j\to g_{q-1}$, $j\to\infty,$ is an analytic function
          on $\mbox{int}(K)$ coinciding with $\lim_j a_{j,q-1}$ (see e.g.\ H\"ormander \cite{hormander}, Cor. 1.2.5, p.4), and has continuous extension to $K,$ thus we can identify $g_{q-1}$ as a member of $A_1(K)$.
          This in turn can be iterated for $f_j$ replaced by $(f_j-\bar{z}^{q-1}g_{q-1})$
          in order to obtain that $g_{q-2}$ is an analytic function
          on $\mbox{int}(K)$ coinciding with $\lim_j a_{j,q-2}$ with continuous extension to $K$, and so on.
          Hence $f$ can, on $\mbox{int}(K)$ be identified as
          the function $f(z)=\sum_{k=0}^{q-1} g_k(z)\bar{z}^k$ for analytic
          $g_k(z)$ such that each $g_k$ has continuous extension to $K.$ Such a representation is unique on $\mbox{int}(K)$ and since $\mbox{int}(K)$ is dense in $K,$ the coefficients extend uniquely to $K.$ 
          This completes the proof.
          \end{proof}


\section{The algebra $\mathbf{PA}_{q}(K)$}
\begin{definition}
For $K\subset\C$  compact,
let $A_1(K)$ be the uniform algebra of continuous  functions on $K,$ which are analytic on the interior of
$K.$  Let $\mbox{PA}_q(K)$ denote the additive group 
of functions $f$ on $K$ of the form $f(z)=\sum_{j=0}^{q-1}\bar{z}^j a_j(z),$
$a_j\in A_1(K)$ (here $\bar{z}^j$ denotes the restriction of $\bar{z}^j$ to $K$). In other words $\mbox{PA}_q(K)$ is the module over $A_1(K)$
generated by $\{1,\bar{z},\ldots,\bar{z}^{q-1}\}.$ In particular, $\mbox{PA}_1(K)=A_1(K).$ We define $\mbox{PA}(K)$ as the union of the $\mbox{PA}_q(K).$
\end{definition}

\begin{lemma}\label{C(K)}
Let $K\subset \C$ be a regular compact set. Then the natural mappings
$$
\begin{array}{llll}
(i) 	&	\mbox{PA}_q(K)|_{\mbox{int}(K)} & \longrightarrow &  A_q(\mbox{int}(K))\\
(ii)	&	\mbox{PA}(K)|_{\mbox{int}(K)} & \longrightarrow & PA(\mbox{int}(K))\\
(iii)	&	\mbox{PA}(K) & \longrightarrow & C(K)
\end{array}
$$
are injective 
\end{lemma}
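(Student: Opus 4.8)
The plan is to treat all three maps uniformly, exploiting that each is additive (indeed $\C$-linear), so that injectivity is equivalent to triviality of the kernel: it suffices to show that a function $f$ in the source which is sent to $0$ is already the zero function on $K$. The single structural fact that drives every case is the regularity hypothesis $K=\overline{\mbox{int}(K)}$, which says precisely that $\mbox{int}(K)$ is dense in $K$, together with the observation that every element of $\mbox{PA}(K)$ is continuous on all of $K$.

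I would dispose of case (iii) first, since it requires the least. An element $f\in\mbox{PA}(K)$ lies in some $\mbox{PA}_q(K)$ and hence has the form $f=\sum_{j=0}^{q-1}\bar{z}^j a_j$ with $a_j\in A_1(K)\subset C^0(K)$; being a finite sum of products of functions continuous on $K$ (the monomials $\bar{z}^j$ are continuous on $\C$), $f$ is itself continuous on $K$. Thus the natural map $\mbox{PA}(K)\to C(K)$ is literally the inclusion of one family of functions on $K$ into the larger family $C(K)$, and an inclusion is injective; there is nothing further to check, and regularity is not even used here.

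For cases (i) and (ii) the map is restriction to $\mbox{int}(K)$, and I would argue by density. Suppose $f\in\mbox{PA}_q(K)$ restricts to the zero function on $\mbox{int}(K)$. By the continuity just noted, $f$ is continuous on $K$ and vanishes on the dense subset $\mbox{int}(K)$; a continuous function vanishing on a dense set vanishes everywhere, so $f\equiv 0$ on $K$, giving injectivity of (i). Case (ii) is identical, since any $f\in\mbox{PA}(K)$ belongs to some $\mbox{PA}_q(K)$ and the same reasoning applies verbatim. One should of course also note that the maps land in the stated codomains — for (i), the restriction $f|_{\mbox{int}(K)}$ is $q$-analytic on the open set $\mbox{int}(K)$ directly from the representation (\ref{eq1}), hence lies in $A_q(\mbox{int}(K))=\mbox{PA}_q(\mbox{int}(K))$ — but this well-definedness is routine and separate from the injectivity claim.

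The point I expect to carry the real weight is not any computation but the correct use of the two structural facts: continuity of the elements up to the boundary of $K$, and the density of $\mbox{int}(K)$ furnished by regularity. Without regularity, $\mbox{int}(K)$ need not be dense, and restriction to the interior could annihilate a nonzero continuous function; so the hypothesis is genuinely needed in (i) and (ii), though not in (iii). I note in passing that the uniqueness of the analytic components $a_j$ — established on $\mbox{int}(K)$ in the opening definition and propagated to $K$ by Corollary \ref{ahernbrunakonsekvens} — is not required for injectivity, and would become relevant only if one wished to identify the source spaces with spaces of coefficient-tuples.
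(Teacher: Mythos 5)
Your proof is correct for the statement as literally written, but it takes a more elementary route than the paper and deliberately sets aside the one ingredient the paper actually extracts from this lemma. The paper derives (i) from Corollary \ref{ahernbrunakonsekvens}: if $f=\sum_j \bar z^j a_j$ vanishes on $\mbox{int}(K)$, the uniqueness of the polyanalytic representation forces every $a_j$ to vanish on $\mbox{int}(K)$ and hence, by unique continuous extension, on $K$; the map $\mbox{PA}_q(K)\to C(K)$ is then handled by the same unique-extension remark, and (ii), (iii) by the union structure. You instead use only that elements of $\mbox{PA}(K)$ are continuous on $K$ and that regularity makes $\mbox{int}(K)$ dense; that does suffice if one reads $\mbox{PA}_q(K)$ strictly as a set of functions on $K$ (as the first sentence of its definition says), and under that reading (iii) is indeed a tautological inclusion with regularity playing no role. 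The caveat concerns your closing remark: the uniqueness of the analytic components is not a dispensable aside in context. The lemma is invoked immediately afterwards to justify viewing $\mathbf{PA}_q(K)$ --- whose product $\diamond_q$ is \emph{defined} through the coefficient tuple $(a_0,\ldots,a_{q-1})$ --- as an algebra of functions inside $C(K)$; for $\diamond_q$ to be well defined on functions one needs exactly that the coefficients are determined by the function, i.e.\ injectivity of the coefficient-tuple module $A_1(K)^q\to C(K)$. Under that reading of ``the module over $A_1(K)$ generated by $1,\bar z,\ldots,\bar z^{q-1}$'' (which is how the paper uses the lemma), your treatment of (iii) as a literal inclusion would be circular, and your density argument for (i) would show only that $f$ vanishes as a function, not that its components do. So your argument stands for the bare injectivity claims, while the paper's detour through Corollary \ref{ahernbrunakonsekvens} buys the stronger coefficient-uniqueness statement that the rest of Section 2 depends on; a complete account should include both.
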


\begin{proof}
The injectivity of (i)  follows immediately from Corollary \ref{ahernbrunakonsekvens}. 
The injectivity of the mapping $\mbox{PA}_q(K)\rightarrow C(K),$ follows since the analytic coefficients extend uniquely from $\mbox{int}(K).$ The injectivity of (ii) and (iii) follow since 
$\mbox{PA}(K)|_{\mbox{int}(K)}$ is the union of the $\mbox{PA}_q(K)|_{\mbox{int}(K)}$ and $\mbox{PA}(K)$ is the union of the $\mbox{PA}_q(K).$
\end{proof}

If $K\not=\overline{\mbox{int}(K)},$ this restriction mapping need not be injective. Even for $q=1,$ the restriction $A_1(K)\rightarrow A_1(\overline{\mbox{int}(K)})$ need not be injective. 

The space 
$$
	\mbox{PA}(K) = \bigcup_{q=1}^\infty\mbox{PA}_q(K)
$$
is  an algebra with respect to the  multiplication
$$
	\left( \sum_j\bar{z}^j a_j(z)\right)\left(\sum_k\bar{z}^k b_k(z)\right) =
		\sum_\ell\bar{z}^\ell\left(\sum_{j+k=\ell}a_j(z)b_k(z)\right)
$$
and we may form the quotient algebra $\mbox{PA}(K)/\langle\overline z^q\rangle,$ where $\langle \overline z^q\rangle$ is the principal ideal of $\mbox{PA}(K)$  generated by $\overline z^q.$  Then $\mbox{PA}(K)/\langle\overline z^q\rangle$ is an associative, commutative, and unital algebra over $\C$. The quotient algebra  $\mbox{PA}(K)/\langle\overline z^q\rangle$ consists of equivalence classes, each of which has a unique representative of the form $f(z)=\sum_{j=0}^{q-1}a_j(z)\overline{z}^j.$ We may thus make the cannonical identification 
$$
	\mbox{PA}_q(K) = \mbox{PA}(K)/\langle\overline z^q\rangle
$$
and endow $\mbox{PA}_q(K)$ with the multiplicative structure induced by this identification. In particular, we again find that  $\mbox{PA}_1(K)=A_1(K).$

We wish to introduce a norm on the algebra  $\mbox{PA}(K).$ To assure that the norm is well-defined, we shall restrict the class of compacta $K$ under consideration to those which are regular compacta.

As a consequence of Lemma \ref{C(K)}, we may consider the algebra  $\mbox{PA}_q(K)$  as a subalgebra of the Banach algebra $C(K)$ endowed with the norm $\|f\|_K=\max\{|f(z)|:z\in K\}$ and so as functions defined on $K.$ Thus, the algebra  $\mbox{PA}(K)$ is a normed subalgebra of  $C(K).$

\begin{definition}  
Let $K\subset\C$ be a regular compact set and let $q\in \Z_+.$
Denote by $\mathbf{PA}_{q}(K)=\mbox{PA}_{q,\diamond_q}(K)$ the space $\mbox{PA}_q(K)$  equipped with the usual addition but with multiplication replaced by  the operation, $\diamond_q ,$ of multiplication in  $\mbox{PA}(K)/\langle\overline z^q\rangle.$
That is, if $f,g\in \mathbf{PA}_q(K),$ we define $f\diamond_q g =f(z)\cdot g(z)\mbox{ mod }\langle\overline z^q\rangle.$ 
\end{definition}

 We may write this multiplication
\begin{equation}
	\diamond_q\colon  \mbox{PA}_q(K)\times \mbox{PA}_q(K) \to 				\mbox{PA}_q(K)
\end{equation}
more explicitely as follows: Let $f,g\in \mbox{PA}_q(K).$ By definition there exists  functions $a_j,b_j$ in $A_1(K)$ such that
$f(z)=\sum_{j=0}^{q-1}a_j(z)\bar{z}^j,$ $g(z)=\sum_{j=0}^{q-1}b_j(z)\bar{z}^j.$ Define 
\begin{equation}
	(f\diamond g)(z):= \sum_{j=0}^{q-1} \left(\sum_{k+l=j} a_k(z) b_l(z)\right)\bar{z}^j.
\end{equation}

Since $\mathbf{PA}_{q}(K)$ with the operation $\diamond$ can be identified with the quotient algebra  $\mbox{PA}(K)/\langle\overline z^q\rangle,$  we have an associative, commutative, algebra over $\C$.
Obviously, for $q=1$, $\diamond_1$ is the usual multiplication and $\mathbf{PA}_{q}(K)$ reduces to
the uniform algebra $A_1(K).$

The quotient algebra  $\mathbf{PA}_q(K)$ considered as a ring is not an integral domain. That is, there are zero divisors. For example, $\overline z^{q-1}\diamond\overline z=0.$ 
The quotient algebra $\mbox{PA}(K)/\langle\overline z^q\rangle$ has a natural quotient semi-norm: For an equivalence class $[f]\in \mathbf{PA}_q(K)$
$$
	\|[f]\|_q = \inf\{\|g\|_K:f\sim g\}.
$$
We may thus consider $(\mathbf{PA}_q(K),\|\cdot\|_q)$ as a semi-normed algebra.  

\begin{lemma}
	For $f\in \mbox{PA}(K),$ $\|[f]\|_q \le  \|f\|_K.$
\end{lemma}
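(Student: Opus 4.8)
The plan is to recognize this as the standard fact that passing to a quotient semi-normed algebra is norm-nonincreasing, which here reduces to an immediate consequence of the definition of the quotient semi-norm. First I would recall that, for $f\in\mathrm{PA}(K)$, the equivalence class $[f]\in\mathbf{PA}_q(K)$ consists of all $g\in\mathrm{PA}(K)$ with $f\sim g$, that is, $f-g\in\langle\overline z^q\rangle$, and that the semi-norm on $\mathbf{PA}_q(K)$ is defined by
$$
	\|[f]\|_q=\inf\{\|g\|_K:f\sim g\}.
$$

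The only observation required is that the equivalence relation $\sim$ is reflexive, so $f\sim f$; hence $f$ itself is one of the competitors appearing in the infimum defining $\|[f]\|_q$. Since the infimum of a set of real numbers is a lower bound for that set, and in particular does not exceed any single member, taking $g=f$ yields $\|[f]\|_q\le\|f\|_K$ at once. I do not expect any genuine obstacle here: the entire content is the reflexivity of $\sim$ together with the elementary property that an infimum is bounded above by each element of the set over which it is taken. One might add, for completeness, that $\|f\|_K$ is well-defined as the sup-norm of the representative $f$ on $K$ by the identification of $\mathrm{PA}(K)$ as a normed subalgebra of $C(K)$ established via Lemma \ref{C(K)}, so that the right-hand side makes sense and the inequality compares two genuine real numbers.
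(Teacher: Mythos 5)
Your proof is correct and is essentially identical to the paper's, which consists of the single line ``$f\sim f$'': both arguments simply note that $f$ itself is a competitor in the infimum defining $\|[f]\|_q$. You have merely spelled out the reflexivity and the elementary property of infima that the paper leaves implicit.
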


\begin{proof}
$f\sim f.$ 
\end{proof}

Our assumption that $K$ is regular is important here. Without this assumption, the semi-norm  $\|\cdot\|_q$ may not be a norm. 
The semi-norm $\|\cdot\|_q$ on the quotient vector algebra $\mathbf{PA}_q(K)$ is a proper norm if and only if the ideal $\langle\overline z^q\rangle$ is topologically closed in $\mbox{PA}(K).$ 

Ideals of this kind are naturally topologically closed. Indeed, 
let $f_n(z)=\sum_{j=1}^{q-1}a_{n,j}(z)\overline{z}^j, n=1,2,\ldots$ be a sequence in $\langle\overline z^q\rangle$ converging to $f(z)=\sum_{j=1}^ma_j(z)\overline{z}^j$ in $\mbox{PA}(K),$ where $m\ge q$ and $a_m\not=0.$  We may write $f_n(z)=\sum_{j=1}^ma_{n,j}(z)\overline{z}^j,$ by setting $a_{n,j}=0,$ for $j\ge q.$ By our familiar argument of taking derivatives, we have that $a_{n,m}(z)\rightarrow a_m,$ as $n\rightarrow\infty.$ Therefore $a_m=0.$ This contradiction shows $m< q$ which proves that  $\langle\overline z^q\rangle$ is topologically closed.
A similar argument will verify that $\langle\overline z^q\rangle$ is topologically closed and therefore $(\mathbf{PA}_q(K),\|\cdot\|_q)$) is a normed algebra.

Since $\mathbf{PA}_{q}(K)\subseteq C^0(K),$ we can define on $\mathbf{PA}_{q}(K)$ another norm, namely the sup-norm
\begin{equation}
	\norm{f}_{\mathbf{PA}_{q}(K)}=\norm{f}_\infty=\sup_{z\in K} \abs{f(z)}=\|f\|_K.
\end{equation}
For simplicity, we shall denote this norm by  $\norm{\cdot }.$
By Corollary \ref{ahernbrunakonsekvens}
$\mathbf{PA}_{q}(K)$ is closed with respect to $\norm{\cdot }.$

However, we do not have a Banach algebra (note that we include automatically commutativity and the condition of the existence of a unit in the algebra, which is not always the case
in various literature, in such cases it is important to write unital and commutative Banach algebra)
because the condition $\norm{fg}\leq \norm{f}\norm{g}$ will not continue to hold true when we replace the multiplication by $\diamond_q.$
\begin{example}\label{banachcounter}
Let $K:=\{z\colon \abs{z-3/4}\leq 1/4\}.$
 There exists $f,g\in \mbox{PA}_2(K)$ such that $\norm{ f}>\norm{f}\cdot \norm{g}.$ Indeed, let $f(z):=z\bar{z},$ $g(z):=1-z\bar{z}.$
 Then $f\diamond_2 g=z\bar{z},$ so that $\norm{f}=1,$ $\norm{g}=1-1/4=3/4,$ $\norm{f\diamond_2 g}=1>1\cdot 3/4=\norm{f}\cdot \norm{g}$.
\end{example}

\begin{remark}\label{innanvarje}
Due to (the counter-) Example \ref{banachcounter} it makes sense to consider replacing, in the definition of uniform algebra, ``Banach algebra" by ``algebra with the standard addition, that is a Banach space under the sup-norm". 
\end{remark}


\section{A generalized notion of uniform algebras}
We begin with a remark.
\begin{remark}
Note that the definition of a uniform algebra, $A$, requires that the  algebra be a subalgebra of $C(X)$, so formally this would require
that the multiplication in $A$ coincide with that in $C(X).$ We do however have the following.
Let $X\subset \C$ be a regular compact set with $\mbox{int}(K)$ connected. A function $P\in \mbox{PA}_q(\mbox{int}(X))$ is a polyanalytic polynomial
(in the sense that its analytic components are complex polynomials) if and only if
$P(z)$ is a ``polyanalytic polynomial" with respect to the multiplication in 
$\mathbf{PA}_{q}(\mbox{int}(X))$, in the sense that for some $N,M\in \Z_+,$ and complex constants $c_{jk}$
\begin{equation}\label{parepbf}
P(z)=\sum_{j=0}^N \left(\sum_{k=0}^{M} c_{jk} \underbrace{(z \diamond_q \cdots \diamond_q z)}_{k-\mbox{times}}  \right) \underbrace{(\bar{z} \diamond_q \cdots \diamond_q \bar{z})}_{j-\mbox{times}}.
\end{equation}
This follows immediately from the fact that 
\begin{equation}
\underbrace{(z \diamond_q \cdots \diamond_q z)}_{k-\mbox{times}} =z^k,\quad \underbrace{(\bar{z} \diamond_q \cdots \diamond_q \bar{z})}_{j-\mbox{times}}=
	\bar{z}^j \mbox{ mod}\,\bar{z}^q
\end{equation}

Denote by  $C^{q-1}(K)$ the set of functions 
that are $q$-analytic on $\mbox{int}(K)$ such that their analytic components have
continuous extensions to $K$, in the sense that they belong to $A_1(K)$ and where
we can extend $\diamond$ using existence and uniqueness of boundary values. 
Denote by $\mathbf{P}_q(X)$ the set of uniform limits on $X$, of $C^{q-1}(X)$-functions having the representation in Eqn.(\ref{parepbf}). 
Then, if $K$ is regular and $\mbox{int}(X)$ is connected, 
\begin{equation}
P_q(X)\cap C^{q-1}(X)=\mathbf{P}_q(X).
\end{equation}
Similarly, it is easily realized that if $K$ is regular, then
\begin{equation}
A_q(X)\cap C^{q-1}(X)=\mathbf{PA}_q(X).
\end{equation}
\end{remark}

\begin{definition}[Uniform algebra in the generalized sense]
A {\em uniform algebra in the generalized sense}, $A$, on a compact Hausdorrf space $X$ is a uniformly closed, point-separating (commutative and unital) algebra  over $\C$, which is a subset of $C(X)$ (but not necessarily a subalgebra of  $C(X)$)  with stadard identity and with the standard addition, that is a Banach space under the sup-norm
such that $A\subset C(X)$ (i.e.\ we do not require that the multiplication in $A$ coincide with that in $C(X)$).
\end{definition}

\begin{observation}
For a regular compact set $K\subset \C,$ we have  that
$\mathbf{PA}_{q}(K)$ is a uniform algebra in the generalized sense.
\end{observation}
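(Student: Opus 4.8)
The plan is to verify each clause of the definition of ``uniform algebra in the generalized sense'' for $A = \mathbf{PA}_q(K)$ in turn, since the notion has been set up precisely so that the verification decomposes into independent pieces. First I would note that $\mathbf{PA}_q(K)$ is a commutative, unital, associative $\C$-algebra: this is exactly the content of the canonical identification $\mathbf{PA}_q(K) = \mathrm{PA}(K)/\langle \overline z^q\rangle$ established in Section~2, with the unit being the constant function $1$ and with $\diamond_q$ the induced quotient multiplication. The ``standard identity and standard addition'' clauses are immediate, since addition in $\mathbf{PA}_q(K)$ is the usual pointwise addition and the identity is the genuine constant $1 \in C(K)$.

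Next I would address the two topological clauses. That $A \subset C(K)$ as a set of functions, and that the inclusion is genuine, follows from Lemma~\ref{C(K)}(iii): the natural map $\mathrm{PA}(K) \to C(K)$ is injective, so each equivalence class in the quotient has a unique representative $f(z) = \sum_{j=0}^{q-1} a_j(z)\overline z^j$ realized as an honest continuous function on $K$. The crucial point to stress here is that $A$ is \emph{not} required to be a subalgebra of $C(K)$: indeed Example~\ref{banachcounter} shows that $\diamond_q$ does not agree with pointwise multiplication, which is precisely why the generalized definition was introduced (see Remark~\ref{innanvarje}). For uniform closedness, that is, completeness as a Banach space under the sup-norm $\|\cdot\|_K$, I would invoke Corollary~\ref{ahernbrunakonsekvens}: a uniformly convergent sequence of functions of the form $\sum_{k=0}^{q-1}\overline z^k a_{j,k}(z)$ with $a_{j,k}\in A_1(K)$ converges to a function of the same form with coefficients again in $A_1(K)$, so $\mathbf{PA}_q(K)$ is closed in $C(K)$ under the sup-norm. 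This was already recorded in Section~2.

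The remaining clause is point separation, and this is the step I expect to require the most care, though it is not deep. I would simply exhibit a separating function: the coordinate function $z \mapsto z$ lies in $A_1(K) = \mathrm{PA}_1(K) \subset \mathbf{PA}_q(K)$, and for any two distinct points $p_1 \neq p_2$ in $K$ we have $p_1 \neq p_2$, so this single function already separates all points of $K$. Thus $\mathbf{PA}_q(K)$ separates points, trivially.

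Assembling these observations yields the claim: $\mathbf{PA}_q(K)$ is commutative, unital, a $\C$-algebra under $\diamond_q$ with standard addition and standard identity, a subset of $C(K)$, uniformly closed under the sup-norm (hence a Banach space), and point-separating. The only genuine content being imported is Corollary~\ref{ahernbrunakonsekvens} for closedness and Lemma~\ref{C(K)} for the faithful realization inside $C(K)$; everything else is formal. The one conceptual subtlety worth a sentence in the writeup is that this is genuinely a \emph{generalized} uniform algebra and not a uniform algebra in the classical sense, precisely because $\diamond_q \neq {}$ pointwise multiplication, as witnessed by Example~\ref{banachcounter}.
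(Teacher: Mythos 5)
Your proposal is correct and follows essentially the same route as the paper: both verify the algebra structure via the quotient identification, cite Corollary \ref{ahernbrunakonsekvens} for uniform closedness, and reduce point separation to the fact that $\mathbf{PA}_q(K)$ contains $A_1(K)$ (the paper appeals to this containment abstractly, while you make it concrete by exhibiting the coordinate function $z$, which is a slightly cleaner way to finish that step). No gaps.
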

\begin{proof}
We have already proved that 
$\mathbf{PA}_{q}(K)$ is a unital commutative 
algebra contained in $C(X)$, which is closed with respect to the sup-norm and contains the constants. We
need to verify that it separates points of $K$, but this follows from the fact
that it contains the set of functions continuous on $K$ and analytic on $\mbox{int}(K)$ and and that is sufficient to separate points. This proves the observation.
\end{proof}


\section{Some observations regarding the spectrum}

In this section we consider spectral analysis  in the theory of polyanalytic functions.
The spectrum as we shall see can be identified as a subset of the complex dual space equipped with the so-called weak-star topology (see below).
First we recall some preliminaries.
\begin{definition}
Let $A$ be a  (commutative unital) Banach algebra. Denote by $A^{-1}$ the multiplicative group of invertible elements
in $A$ (i.e.\ $f\in A^{-1}$ if there exists $g\in A$ such that with the multiplication in $A$ we have $f\cdot g=1$).
We denote by $f^{-1}$ the inverse element of $f$.
A complex number is said to belong to the {\em resolvent} of $f\in A$ if $\lambda\cdot 1 -f$ is invertible. 
The set of complex $\lambda$ for which $\lambda\cdot 1 -f$ is not invertible, is called the {\em spectrum} of $f$ (in $A$)
and is denoted $\sigma_A(f).$ 
\end{definition}

For our purposes (see Remark \ref{innanvarje}), it makes sense to consider the following.
\begin{definition}
Let $A$ be a uniform algebra in the generalized sense. Denote by $A^{-1}$ the multiplicative group of invertible elements
in $A$ (i.e.\ $f\in A^{-1}$ if there exists $g\in A$ such that with the multiplication in $A$ we have $f\cdot g=1$).
We denote by $f^{-1}$ the inverse element of $f$.
A complex number is said to belong to the {\em resolvent}\index{Resolvent} of $f\in A$ if $\lambda\cdot 1 -f$ is invertible. 
The set of complex $\lambda$ for which $\lambda\cdot 1 -f$ is not invertible, is called the {\em spectrum} of $f$ (in $A$)
and is denoted $\sigma_A(f).$ 
\end{definition}

\begin{lemma}\label{XYZ}
Let $X, Y$ and $Z$ be Banach spaces with $Y$ a closed subspace of $Z$ and let $F:X\rightarrow Z$ be analytic. If $F(X)\subset Y,$ then $F:X\rightarrow Y$ is also analytic. 
\end{lemma}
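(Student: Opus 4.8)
The plan is to use the local power-series description of a holomorphic map between complex Banach spaces (equivalently, its Fréchet-derivative description) and to exploit the hypothesis that $Y$ is \emph{closed} in $Z.$ The essential point is that every object witnessing the analyticity of $F$ as a $Z$-valued map---its Fréchet derivatives, and the homogeneous terms of its Taylor expansions---is produced as a limit, of difference quotients or of Riemann sums of a Cauchy integral, of values of $F,$ all of which lie in $Y;$ since $Y$ is closed, these limits again lie in $Y.$ Because $Y$ carries the norm inherited from $Z,$ every norm estimate established in $Z$ is literally the same estimate in $Y,$ so the analytic structure passes to the corestriction $F\colon X\to Y$ unchanged.

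Concretely, I would first fix $x_0\in X.$ Since $F\colon X\to Z$ is analytic it is Fréchet differentiable at $x_0,$ with derivative $DF(x_0)\in\mathcal{L}(X,Z)$ satisfying $\norm{F(x_0+h)-F(x_0)-DF(x_0)h}=o(\norm{h}).$ For each fixed $h\in X$ the difference quotient $t^{-1}\big(F(x_0+th)-F(x_0)\big)$ lies in $Y,$ because $F$ takes all its values in $Y$ and $Y$ is a linear subspace; letting $t\to0$ and using that $Y$ is closed gives $DF(x_0)h\in Y.$ Thus $DF(x_0)$ is a bounded linear map into $Y,$ and the remainder estimate, being norm-identical in $Y$ and in $Z,$ shows that $F\colon X\to Y$ is Fréchet differentiable at $x_0$ with the same derivative. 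As $x_0$ was arbitrary, $F$ is holomorphic as a map into $Y.$

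If one prefers to argue through Taylor expansions, I would instead recover the $n$-homogeneous term of $F$ at $x_0$ by the Cauchy formula
$$
	P_n(h)=\frac{1}{2\pi i}\oint_{\abs{\zeta}=1}\frac{F(x_0+\zeta h)}{\zeta^{n+1}}\,d\zeta,
$$
valid for small $\norm{h}.$ The integrand is a continuous $Y$-valued function, so each approximating Riemann sum is a finite linear combination of elements of $Y$ and hence lies in $Y;$ closedness of $Y$ forces $P_n(h)\in Y.$ The local expansion $F(x_0+h)=\sum_{n\ge0}P_n(h)$ then converges, in the common norm of $Y$ and $Z,$ to a $Y$-valued sum, exhibiting $F$ as analytic into $Y.$

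I do not expect a genuine obstacle here: the entire content is the interplay between closedness of $Y$ and the representation of derivatives and Taylor coefficients as limits of $Y$-valued expressions. The only point that requires a moment's care is the observation that $Y,$ as a closed subspace, carries exactly the restriction of the $Z$-norm, so that the differentiability estimate and the uniform convergence of the power series are inherited verbatim; this is precisely what upgrades the conclusion from ``$F$ is $Y$-valued'' to ``$F$ is $Y$-analytic.''
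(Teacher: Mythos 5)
Your argument is correct, but it proceeds by a genuinely different route from the paper's. The paper disposes of the lemma in three lines by duality: analyticity of a Banach-space-valued map is equivalent to weak analyticity, so it suffices to check that $\ell\circ F$ is analytic for every $\ell\in Y^*$; by Hahn--Banach one extends $\ell$ to $L\in Z^*$, and $\ell\circ F=L\circ F$ is analytic because $F\colon X\to Z$ is. You instead work directly with the witnesses of analyticity --- Fr\'echet derivatives as limits of difference quotients, or homogeneous Taylor terms as Cauchy integrals --- and observe that each is a limit of $Y$-valued expressions, hence lies in $Y$ by closedness, while all norm estimates transfer verbatim because $Y$ carries the restriction of the $Z$-norm. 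Your approach is more self-contained in that it avoids invoking the weak-to-strong analyticity theorem (Dunford's theorem and its Banach-domain analogue) and Hahn--Banach, at the cost of being longer and of requiring the equivalence between complex Fr\'echet differentiability (or local power-series representation) and analyticity in the vector-valued setting; the paper's proof is shorter but leans on those two classical duality facts. Both correctly isolate closedness of $Y$ as the essential hypothesis --- in your proof it guarantees that limits of $Y$-valued quantities stay in $Y$, in the paper's it guarantees (implicitly) that $Y$ is itself a Banach space so that the weak analyticity criterion applies.
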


\begin{proof}
It is sufficient to show that, for each $\ell\in Y^*,$ the composition $\ell\circ F$ is analytic. Let $L\in Z^*$ be a continuous extension of $\ell.$ Since $F:X\rightarrow Z$ is analytic,  $L\circ F$ is analytic. But $\ell\circ F= L\circ F.$
\end{proof}

\begin{lemma}\label{Fg}
Let $G$ be an open subset of $\C,$ let $F:G\longrightarrow C(K)$ be analytic and let $g\in C(K).$ Then the mapping $G\rightarrow C(K)$ given by $\lambda\mapsto F(\lambda)\cdot g$ is also analytic. 
\end{lemma}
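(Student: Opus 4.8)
The plan is to recognize that multiplication by the fixed element $g$ is a bounded linear operator on $C(K)$, and that precomposing such an operator with an analytic map yields an analytic map.

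First I would introduce the multiplication operator $M_g\colon C(K)\to C(K)$ defined by $M_g(f)=f\cdot g$, where the product is the usual pointwise multiplication in $C(K)$. This operator is clearly linear, and it is bounded because the sup-norm is submultiplicative for pointwise multiplication: $\|M_g(f)\|=\|fg\|_K\le \|f\|_K\,\|g\|_K=\|g\|\,\|f\|$, so that $\|M_g\|\le\|g\|$. It is worth noting that this is exactly the point where we use that the multiplication here is the ordinary pointwise product in $C(K)$, for which the sup-norm is an algebra norm — in contrast to the operation $\diamond_q$, which need not be submultiplicative. The map in the statement is then precisely the composition $M_g\circ F$.

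Next I would show that $M_g\circ F$ is analytic by means of the difference quotient. Since $F$ is analytic it is complex-differentiable, so $\frac{F(\lambda+h)-F(\lambda)}{h}$ converges in $C(K)$ to $F'(\lambda)$ as $h\to 0$. By linearity,
$$
\frac{(M_g\circ F)(\lambda+h)-(M_g\circ F)(\lambda)}{h}=M_g\!\left(\frac{F(\lambda+h)-F(\lambda)}{h}\right),
$$
and, because $M_g$ is continuous, the right-hand side converges to $M_g(F'(\lambda))=F'(\lambda)\cdot g$. Hence $M_g\circ F$ is complex-differentiable at each $\lambda\in G$ with derivative $F'(\lambda)\cdot g$, so it is analytic, which is the claim.

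Alternatively, and closer in spirit to the preceding Lemma \ref{XYZ}, one may use the weak characterization of analyticity: for each $\ell\in C(K)^*$ the functional $f\mapsto \ell(f\cdot g)$ is again an element of $C(K)^*$ (being $\ell\circ M_g$), so $\lambda\mapsto \ell(F(\lambda)\cdot g)$ is scalar analytic by the analyticity of $F$; since this holds for every $\ell$, the $C(K)$-valued map $\lambda\mapsto F(\lambda)\cdot g$ is analytic. Either way, the only substantive point is the boundedness of $M_g$, so I do not expect a genuine obstacle here — the result is essentially the statement that bounded linear operators preserve vector-valued analyticity.
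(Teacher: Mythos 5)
Your proof is correct, but it proceeds differently from the paper's. The paper's proof expands $F$ locally as a power series $F(\lambda)=\sum_k(\lambda-\lambda_0)^k\alpha_k$ with $\alpha_k\in C(K)$ and simply multiplies termwise by $g$ to get a power series for $F(\lambda)\cdot g$; the (unstated) justification that the resulting series still converges is exactly the submultiplicativity $\|\alpha_k\cdot g\|\le\|\alpha_k\|\,\|g\|$ that you isolate as the boundedness of the operator $M_g$. Your main argument packages this as: $M_g$ is a bounded linear operator and bounded linear operators preserve complex differentiability, verified by the difference quotient; your alternative argument reduces to scalar analyticity of $\ell\circ M_g\circ F$ for every $\ell\in C(K)^*$, which parallels the proof of Lemma \ref{XYZ}. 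All three routes are sound and rest on the same substantive point. What your version buys is that the one analytic ingredient (boundedness of $M_g$) is made explicit and the convergence of the multiplied series is not left implicit; what the paper's version buys is brevity and the fact that it directly exhibits the local power series of the new map. Your remark that this is precisely where ordinary pointwise multiplication is needed, as opposed to $\diamond_q$, is a pertinent observation consistent with the paper's Example \ref{banachcounter}.
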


\begin{proof}
Since $F$ is analytic it has a local representation as a power series 
$$
	F(\lambda) = \sum_{k=0}^\infty (\lambda-\lambda_0)^k\alpha_k, \quad \alpha_k\in C(K). 
$$
Thus, 
$$
	F(\lambda)\cdot g = \sum_{k=0}^\infty (\lambda-\lambda_0)^k\alpha_k\cdot g, \quad \alpha_k\cdot g\in C(K). 
$$
Thus, $\lambda\mapsto F(\lambda)\cdot g$ is also analytic. 
\end{proof}

\begin{proposition}
	Consider the uniform algebra in the generalized sense, $\mathbf{PA}_q(K)$, for a fixed $q\in \Z_+,$ and a regular compact set $K\subset\C.$ 
Let $f(z)$ have on $U$ the representation $\sum_{j=0}^{q-1}a_0(z)\bar{z}^j$, where each $a_j(z)$ is analytic and has continuous extension to $K.$
	Then $\sigma_{\mathbf{PA}_q(K)}(f)\subseteq a_0(K)$.
	In particular the spectrum is closed and bounded, i.e.\ compact.
	Furthermore, 
	$(\lambda -f)^{-1}$ is an analytic function of $\lambda$
	on $\{\C\setminus a_0(K)\}.$
\end{proposition}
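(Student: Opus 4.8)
The plan is to exploit the algebraic structure of $\mathbf{PA}_q(K)$ as the truncated polynomial ring $A_1(K)[\bar z]/\langle\bar z^q\rangle$, in which the coefficient $\bar z$ is nilpotent. The central claim I would isolate is that an element $h=\sum_{j=0}^{q-1}c_j(z)\bar z^j$ is invertible in $\mathbf{PA}_q(K)$ if and only if its constant coefficient $c_0$ is invertible in the genuine uniform algebra $A_1(K)$. One direction is purely formal: the assignment $\pi\colon\sum_j c_j\bar z^j\mapsto c_0$ is reduction modulo the ideal generated by $\bar z$ and hence a unital $\C$-algebra homomorphism $\mathbf{PA}_q(K)\to A_1(K)$, so it carries units to units; thus invertibility of $h$ forces invertibility of $c_0$. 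The converse I would obtain from nilpotence, in the next step.

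Applying this with $h=\lambda-f$, whose constant coefficient is $\lambda-a_0$, reduces the whole problem to invertibility of $\lambda-a_0$ in $A_1(K)$. First I would record that $\lambda-a_0\in A_1(K)$ is invertible precisely when $\lambda\notin a_0(K)$: if $\lambda\notin a_0(K)$ then $\lambda-a_0$ is nowhere zero on $K$, so $1/(\lambda-a_0)$ is continuous on $K$ and analytic on $\mbox{int}(K)$ and therefore lies in $A_1(K)$, while if $\lambda=a_0(z_0)$ for some $z_0\in K$ then point evaluation at $z_0$ obstructs invertibility. Given $\lambda\notin a_0(K)$, set $u=\lambda-a_0$ and $b=f-a_0=\sum_{j=1}^{q-1}a_j\bar z^j$; then $\lambda-f=u\diamond(1-u^{-1}\diamond b)$, and since $u^{-1}\diamond b$ is a multiple of $\bar z$ we have $(u^{-1}\diamond b)^{\diamond q}=0$, so the finite geometric series gives
\[
(\lambda-f)^{-1}=\left(\sum_{m=0}^{q-1}(u^{-1}\diamond b)^{\diamond m}\right)\diamond u^{-1}=\sum_{m=0}^{q-1}\frac{1}{(\lambda-a_0)^{m+1}}\,P_m,
\]
where $P_m:=b^{\diamond m}$ is independent of $\lambda$. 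In particular every $\lambda\notin a_0(K)$ lies in the resolvent set, which is exactly the inclusion $\sigma_{\mathbf{PA}_q(K)}(f)\subseteq a_0(K)$; together with the homomorphism argument it yields the equality $\sigma_{\mathbf{PA}_q(K)}(f)=a_0(K)$. Since $a_0$ is continuous and $K$ compact, $a_0(K)$ is compact, so the spectrum is closed and bounded.

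For the analyticity of $\lambda\mapsto(\lambda-f)^{-1}$ on $\C\setminus a_0(K)$ I would read off the displayed formula. The elements $P_m\in\mathbf{PA}_q(K)\subset C(K)$ are fixed, and each factor $(\lambda-a_0)^{-(m+1)}$ is a pure coefficient (it carries no power of $\bar z$), so $\diamond$-multiplication by it coincides with ordinary pointwise multiplication in $C(K)$. The scalar-valued resolvent $\lambda\mapsto 1/(\lambda-a_0)$ is analytic as a $C(K)$-valued map, since its local Neumann expansion converges in the sup-norm ($C(K)$ being a Banach algebra), and hence so are its powers $1/(\lambda-a_0)^{m+1}$. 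By Lemma \ref{Fg}, multiplying each of these $C(K)$-valued analytic functions by the fixed $P_m\in C(K)$ preserves analyticity, and a finite sum of analytic maps is analytic; thus $\lambda\mapsto(\lambda-f)^{-1}$ is analytic into $C(K)$. Finally, since $\mathbf{PA}_q(K)$ is a closed subspace of $C(K)$ by Corollary \ref{ahernbrunakonsekvens} and the image lies in it, Lemma \ref{XYZ} upgrades this to analyticity into $\mathbf{PA}_q(K)$.

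The main obstacle, and the reason the standard spectral-theory template does not transfer verbatim, is that $\mathbf{PA}_q(K)$ is \emph{not} a Banach algebra under the sup-norm: submultiplicativity fails, as Example \ref{banachcounter} shows, so one can neither invert $\lambda-f$ by an infinite Neumann series controlled by $\norm{(\lambda_0-f)^{-1}}$ nor bound the spectrum through $\norm{f}$. The remedy is to replace the unavailable infinite Neumann series by the \emph{finite} geometric series furnished by the nilpotence of $\bar z$, and to transport the analyticity statement across the inclusion $\mathbf{PA}_q(K)\subset C(K)$ by means of Lemmas \ref{XYZ} and \ref{Fg}, where the multiplication is genuinely well behaved.
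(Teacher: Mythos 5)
Your argument is correct and follows essentially the same route as the paper's: both reduce invertibility of $\lambda-f$ in $\mathbf{PA}_q(K)$ to invertibility of the constant coefficient $\lambda-a_0$ in $A_1(K)$, and both obtain analyticity of the resolvent on $\C\setminus a_0(K)$ by combining Lemmas \ref{Fg} and \ref{XYZ}. The only difference is presentational: the paper determines the coefficients of $(\lambda-f)^{-1}$ by equating coefficients and solving a recursion, whereas you solve that same recursion in closed form via the finite geometric series in the nilpotent element $\bar z$.
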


\begin{proof}
Let $f\in \mathbf{PA}_q(K)$ with representation  given by $f(z)=\sum_{j=0}^{q-1}a_j(z)\bar{z}^j$,  on $U.$ 
 
From the general theory of uniform algebras, the set of $\lambda\in\C$ where $(\lambda-a_0)$ is invertible in $A_1(K)$ is open and contains $\C\setminus a_0(K).$ That is, the resolvant $R_0$ of $a_0$ is open and the spectrum $\sigma_0$ of $a_0$ is contained in $a_0(K).$ Moreover, the mapping $\C\setminus a_0(K)\rightarrow A_1(K),$ given by $\lambda\mapsto (\lambda-a_0)^{-1}$ is analytic. 

Supoose $\lambda\in\C$ is such that $(\lambda-f)$ is invertible in  $\mathbf{PA}_q(K).$ 
If  $h(\lambda)=(\lambda-f)^{-1}\in \mathbf{PA}_q(K)$, with representation on $U$ given by 
$$h(\lambda)(z)=\sum_{j=0}^{q-1}c_{\lambda,j}(z)\bar{z}^j,$$ 
then, since  $(\lambda-f)h(\lambda)=1,$ from the uniqueness of the analytic coefficients:
$$
	(\lambda-a_0)c_{\lambda,0}=1, 
$$
$$
	(\lambda-a_0(z))c_{\lambda,j}-\sum_{i=1}^ja_ic_{\lambda,j-i}=0, \quad j=1,\ldots,q-1.
$$
Notice that we have shown that if $(\lambda-f)$ is invertible in $\mathbf{PA}_q(K),$ then  $\lambda$ is in the resolvant $R_0$ of $a_0\in A_1(K).$ Conversely, if $\lambda\in R_0,$ then the function $c_{\lambda,0}$ and, 
for $j=1,\ldots,q-1,$ the functions $c_{\lambda,j}:\C\setminus a_0(K)\rightarrow A_1(K),$ recursively defined by
$$
	c_{\lambda,j} = \sum_{i=1}^j(\lambda-a_0)^{-1}a_ic_{\lambda,j-i},
$$
are well-defined. We have shown that the spectrum of $f$ in $\mathbf{PA}_q(K)$ is the same as the spectrum of $a_0$ in $A_1(K).$ Thus, the spectrum of $f$ is compact and contained in $a_0(K).$  

It also follows that the functions $\lambda\mapsto c_{\lambda,j}$ are  analytic  on $\C\setminus a_0(K).$  
We have found the analytic coefficients of $h(\lambda).$ It follows from Lemma \ref{Fg} that for $j=0,\ldots,q-1,$ the mappings $\C\setminus a_0(K)\rightarrow C(K),$ given by $\lambda\mapsto c_{\lambda,j}\overline z^j$ are analytic and by Lemma \ref{XYZ} that they are also analytic to  $\mathbf{PA}_q(K).$ Hence, the mapping $\lambda\mapsto (\lambda-f)^{-1},$ as a finite sum of the analytic mappings $\lambda\mapsto  c_{\lambda,j}\overline z^j,$ is also analytic from $\C\setminus a_0(K)$ to  $\mathbf{PA}_q(K).$

This concludes the proof.

\end{proof}

Note that 
$\lambda\in \sigma_{\mathbf{PA}_q(K)}(f)$ implies that
$\abs{\lambda}\leq \max_{z\in K}\abs{a_0(z)}=\norm{a_0}.$ 
For a $q$-analytic function $f(z)=\sum_{j=0}^{q-1}a_j(z)\bar{z}^j$ on a domain $\Omega,$
we have an associated function of two complex variables
\begin{equation}
F(z,w):=\sum_{j=0}^{q-1}a_j(z)w^j, \quad (z,w)\in \Omega\times\C. 
\end{equation}
This implies
$a_0(z)=F(z,0).$
We immediately have the following.

\begin{corollary}
	Let $q\in \Z_+$ and for a regular
compact set $K\subset\C,$ 
let $f\in \mathbf{PA}_q(K)$. Then
	$\sigma_{\mathbf{PA}_q(K)}(f)\subseteq \{\abs{z}\leq \norm{F(z,0)}\}.$
\end{corollary}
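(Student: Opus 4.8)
The plan is to derive this directly from the preceding Proposition, which already performs all the substantive work. That Proposition identifies the spectrum of $f$ in $\mathbf{PA}_q(K)$ with the spectrum of its leading analytic component $a_0$ in the uniform algebra $A_1(K)$, and in particular yields the containment $\sigma_{\mathbf{PA}_q(K)}(f)\subseteq a_0(K)$. So the corollary reduces to translating this containment into a disc bound and unwinding the two-variable notation.

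First I would recall that, writing $f(z)=\sum_{j=0}^{q-1}a_j(z)\bar z^j$ with $a_j\in A_1(K)$, the associated holomorphic function of two variables satisfies $F(z,0)=a_0(z)$; this is precisely the identification made in the remark just above the corollary. Next I would take an arbitrary $\lambda\in\sigma_{\mathbf{PA}_q(K)}(f)$. By the Proposition, $\lambda\in a_0(K)$, so $\lambda=a_0(z_0)$ for some $z_0\in K$, whence $\abs{\lambda}=\abs{a_0(z_0)}\le\max_{z\in K}\abs{a_0(z)}=\norm{a_0}$. Substituting $a_0(z)=F(z,0)$ gives $\norm{a_0}=\max_{z\in K}\abs{F(z,0)}=\norm{F(z,0)}$, which is exactly the radius appearing in the statement.

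Finally I would conclude that every $\lambda$ in the spectrum satisfies $\abs{\lambda}\le\norm{F(z,0)}$, i.e.\ lies in the closed disc $\{\abs{z}\le\norm{F(z,0)}\}$, which is the assertion. I do not expect any genuine obstacle here: all the analytic content — the invertibility analysis of $\lambda-f$ and the recursive construction of the coefficients $c_{\lambda,j}$ — was already carried out in the Proposition, so this statement is a routine corollary. The only point requiring a moment's care is purely notational, namely reading $\{\abs{z}\le\norm{F(z,0)}\}$ as the closed disc centred at the origin of radius $\max_{z\in K}\abs{F(z,0)}$, and confirming that this radius equals $\norm{a_0}$ through the identity $F(z,0)=a_0(z)$.
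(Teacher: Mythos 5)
Your argument is correct and is exactly the paper's: the authors likewise deduce the corollary immediately from the preceding Proposition via $\sigma_{\mathbf{PA}_q(K)}(f)\subseteq a_0(K)$, the bound $\abs{\lambda}\le\norm{a_0}$, and the identification $a_0(z)=F(z,0)$. No further comment is needed.
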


In the case $q=1$ we have $f(z)=a_0(z)$ so clearly
$\lambda\in \sigma_{\mathbf{PA}_1(K)}(f)$ implies that
$\abs{\lambda}\leq \max_{z\in K}\abs{f(z)}=\norm{f}.$


\section{Characters}

\begin{definition}
A character $\varphi$ of a Banach algebra $F$
is an homomorphism $\varphi:F\rightarrow\C$ which is non-trivial (meaning that it is not identically $0$). 
We denote the set of characters of $F$ by Char$(F).$ We note that characters are continuous.
\end{definition}

We are interested in the analogous case when $\varphi$ is a multiplicative linear functional on $\mathbf{PA}_{q}(K)$ where 
the multiplication 
is given by $\diamond_q.$
In light of Remark \ref{innanvarje} our analogue of the previous definition will be the following.

\begin{definition} Let  $F(+,\bullet)$ be  a uniform algebra in the generalized sense, (where $\bullet$ denotes the multiplication in $F$). 
A character $\varphi$ of $F$,
is a non-trivial continuous linear functional on $F$ which is multiplicative. 
Thus, $\varphi\in F^\prime$ (where $F^\prime$ denotes the topological dual of the Banach space $F$) such that for all $f,g\in F,$
$\varphi(f \bullet g)=\varphi(f)\varphi(g).$ We denote the set of characters of $F$ by Char$(F).$ Note that $\varphi(1)=1.$
\end{definition}

Denote by $(z,w)$ the Euclidean coordinate in $\C^2$ and for a compact $S\subset\C^2,$ denote 
by $A_1(S)$ the set of functions, 
holomorphic for $(z,w)\in \mbox{int}S$ and continuous on $S.$
For a subset $K\subset \C$ denote by $K^*:=\{\bar{z} \colon z\in K\}\subset \C.$

Denote by $(z,w)$ the Euclidean coordinate in $\C^2$ and for a compact $S\subset\C^2,$ denote 
by $A_1(S)$ the set of functions,  continuous on $S$ and
analytic for $(z,w)\in \mbox{int}(S).$  Characters of spaces of type $A_1(S)$ are discussed for example in \cite{stout}. 

In particular 
for a regular compact subset $K\subset \C$ denote by $K^*:=\{\bar{z} \colon z\in K\}\subset \C$    and set $S=K\times K^*.$ Suppose $f\in PA_q(K).$ Then, 
\begin{equation}\label{f}
	f(z) = \sum_{j=0}^{q-1}a_j(z)\overline z^j, \quad a_j\in A_1(K). 
\end{equation}
The  function $F\in A_1(K\times \C)$ defined as 
\begin{equation}\label{F}
	F(z,w) =  \sum_{j=0}^{q-1}a_j(z)w^j, \quad (z,w)\in K\times \C \subset \C^2,
\end{equation}
may be considered as an extension of $f$ in the sense that it coincides with $f$ when restricted to the graph of $w=\overline z$ over $K.$ 
This yields an injection of the space  $PA_q(K)$ to the space  $A_1(K\times\C).$ Moreover, we have seen in the introductory paragraphs that $F$ is the only function analytic on $ \mbox{int}(K)\times\C$ whose restriction to the graph of $w=\overline z$ over $ \mbox{int}(K)$ coincides with $f.$ Since $ \mbox{int}(S)= \mbox{int}(K)\times \mbox{int}(K^*),$ and $K$ is regular, it follows that  $\mbox{int}(S)$ is dense in $S$ and so
the function $F$ is the only function in $A_1(S),$ whose restriction to the graph of $w=\overline z$ over $K$ coincides with $f.$ We have shown that we can consider $PA_q(K)$ as a subspace of $A_1(S).$ 

Let $(w^q)$ be the ideal in $A_1(K\times\C)$ generated by the function $w^q$ considered as an element of $A_1(K\times\C).$ Although the natural projection $A_1(K\times\C)\rightarrow A_1(K\times\C)/(w^q)$ is not an injection, the composition $PA_q(K)\rightarrow A_1(K\times\C)\rightarrow A_1(S)/(w^q)$ is an injection, so we may in this sense associate to a function $f\in PA_q(K)$ a unique member of  $A_1(K\times\C)/(w^q).$ To put it differently, the projection  $A_1(K\times\C)\rightarrow A_1(K\times\C)/(w^q)$ restricted to  $PA_q(K)$ as a subset of $A_1(K\times\C)$ is an injection. 

In the same manner, we may assiciate to each function $f\in PA_q(K)$ a unique member of  $A_1(S)/(w^q).$ That is,  the projection  $A_1(S)\rightarrow A_1(S)/(w^q)$ restricted to  $PA_q(K)$ as a subset of $A_1(S)$ is an injection.

If $A_1(S)$ is defined using the usual multiplication and the topology of uniform convergence, then it is a uniform algebra, but  the ideal $(w^q)$ need not be closed, even in the simpler situation where we consider $A_1(Q),$ with $Q$ a compact set in $\C= \C_w.$ The following example was given to us by Thomas Ransford. Consider the case where $Q$ is the disc with center $1$ and radius $1.$  We claim that

(i) the closed ideal generated by $w$ (i.e. the closure of $(w)$) consists of all functions in $A_1(Q)$ that vanish at $0.$

(ii) there exists a function in $A(Q)$ vanishing at $0$ that is not in $(w).$
Clearly (i) and (ii) together show that $(w)$ is not closed in $A(Q).$

Proof of (i). Let $f$ be in $A(Q)$ with $f(0)=0.$  `Dilating', we can approximate $f$ by a function $g$ holomorphic on an open neighbourhood of $Q.$ The new function $g$ need not vanish at $0,$ but it must be small there, so, adding a small constant if necessary, we can suppose that $g(0)=0.$ Since $0$ is now in the interior of the set where $g$ is holomorphic, we can write $g(w)=h(w)\cdot w,$ where $h$ is also holomorphic in a neighbourhood of $Q.$ In particular, $g$ lies in $(w).$

Proof of (ii). Consider $f(w)=\sqrt w.$ This belongs to $A(Q)$ and vanishes at $0.$ But $f$ is not in $(w)$ because $|f(w)|/|w|$ tends to infinity as $w$ tends to $0$ in 
$Q.$

We would like to describe the space Char$A_1(S)$ with the help of polynomial approximation. 
For a compact set $X\subset\C^n,$ denote by $P(X)$ the uniform limites on $X$ of polynomials. We always have $P(X)\subset A_1(X).$ 
In general, the problem of describing compact sets $X\subset\C^2,$ for which $P(X)=A_1(X)$ is extremely difficult.  Even for the case that $X$ is a product $K_1\times K_2$ and $P(K_i)=A(K_i), i=1,2,$ it is not always the case that $P(K_1\times K_2) = A_1(K_1\times K_2).$ For a simple counter example, we may take $K_1$ to be a singleton and $K_2$ to be a closed disc. 
The function $\overline w$ is in $A_1(K_1\times K_2)$ but not in $P(K_1\times K_2).$ However, for our set $S=K\times K^*$ we may apply the following result.

\begin{theorem}\label{product}
If $K_1$ and $K_2$ are regular compact subsets of $\C$ with connected complements and $K=K_1\times K_2,$ then 
$P(K) = A_1(K).$ 
\end{theorem}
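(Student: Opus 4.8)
The statement reduces to the inclusion $A_1(K)\subseteq P(K)$, since the reverse inclusion is routine: every polynomial lies in $A_1(K)$, and $A_1(K)$ is uniformly closed (a uniform limit of continuous functions holomorphic on $\mbox{int}(K)$ is again continuous and, by the Weierstrass convergence theorem in several variables, holomorphic on $\mbox{int}(K)$). Before attacking the main inclusion I would record two classical facts. First, by Mergelyan's theorem each planar factor satisfies $P(K_i)=A_1(K_i)=A(K_i)$, because $K_i$ has connected complement. Second, $K=K_1\times K_2$ is polynomially convex: a planar compactum with connected complement equals its own polynomial hull, and the polynomial hull of a product is the product of the hulls, so $\widehat{K}=\widehat{K_1}\times\widehat{K_2}=K$. (This makes an Oka--Weil argument available as an alternative endgame, but the route below is more self-contained.)

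The core of the proof is a slicing argument that turns the two-variable problem into a one-variable, Banach-space-valued one. Set $B:=A(K_2)$, a uniform algebra, and define $\tilde f\colon K_1\to B$ by $\tilde f(z)=f(z,\cdot)$. I would verify three properties. \emph{Continuity}: $\|\tilde f(z)-\tilde f(z')\|_B=\sup_{w\in K_2}|f(z,w)-f(z',w)|$ tends to $0$ as $z'\to z$ by uniform continuity of $f$ on the compact set $K$. \emph{Values in $B$}: for $z\in\mbox{int}(K_1)$ the slice $f(z,\cdot)$ is continuous on $K_2$ and holomorphic on $\mbox{int}(K_2)$, hence lies in $A(K_2)$; for boundary points $z$ I would use the regularity of $K_1$ to choose $z_n\in\mbox{int}(K_1)$ with $z_n\to z$, observe $\tilde f(z_n)\to\tilde f(z)$ in $B$, and conclude $\tilde f(z)\in B$ since $A(K_2)$ is closed. \emph{Holomorphy on $\mbox{int}(K_1)$}: by Dunford's theorem it suffices to check weak holomorphy, i.e.\ that $z\mapsto\mu(\tilde f(z))$ is holomorphic for every $\mu\in B^*$. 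I would verify this by Morera: for a triangle $T\subset\mbox{int}(K_1)$ one has $\oint_{\partial T}\mu(\tilde f(z))\,dz=\mu\bigl(\oint_{\partial T}\tilde f(z)\,dz\bigr)$, and the $B$-valued integral $\oint_{\partial T}\tilde f(z)\,dz$ vanishes because its value at each $w\in\mbox{int}(K_2)$ is $\oint_{\partial T}f(z,w)\,dz=0$, and this extends to all $w\in K_2$ by continuity together with the regularity of $K_2$, which makes $\mbox{int}(K_2)$ dense in $K_2$.

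With $\tilde f$ identified as a continuous $B$-valued function on $K_1$ that is holomorphic on $\mbox{int}(K_1)$, I would invoke the vector-valued form of Mergelyan's theorem: since $K_1$ has connected complement, $\tilde f$ can be approximated uniformly on $K_1$ by $B$-valued polynomials $\sum_{k}z^k b_k$ with $b_k\in B=A(K_2)$. Unwinding the identification, this approximates $f(z,w)$ uniformly on $K$ by finite sums $\sum_k z^k b_k(w)$ with $b_k\in A(K_2)$. Finally I would approximate each coefficient $b_k$ uniformly on $K_2$ by an ordinary polynomial in $w$ (Mergelyan for $K_2$); substituting and collecting terms yields a genuine polynomial in $(z,w)$ approximating $f$ to within any prescribed tolerance on $K$. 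Hence $f\in P(K)$.

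The step I expect to be the main obstacle is the vector-valued Mergelyan theorem used in the last paragraph; the scalar statement does not formally imply its Banach-valued analogue. I would justify it by noting that Vitushkin's constructive proof of Mergelyan's theorem proceeds entirely through linear integral operators (mollification, the Cauchy transform, and the localization operator) whose scalar kernels act unchanged on $B$-valued functions, with the governing estimates phrased in terms of $\|\cdot\|_B$ in place of $|\cdot|$; Runge's theorem then replaces the resulting rational functions by polynomials using the connectedness of $\C\setminus K_1$. The only other point requiring care is the interchange of $\mu$ with the vector-valued contour integral above, which is justified because the $B$-valued Riemann integral is a norm-limit of Riemann sums and $\mu$ is continuous and linear.
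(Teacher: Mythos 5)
Your argument is sound but follows a genuinely different route from the paper. The paper's proof is short and top-down: Mergelyan on each factor gives $R(K_i)=A_1(K_i)$; the cited results of Falc\'o--Gauthier--Manolaki--Nestoridis (Corollary 3.6 and Theorem 4.6 of \cite{FGMN}) then yield $\overline{\mathcal O}(K)=A_1(K)$, i.e.\ every function in $A_1(K)$ is a uniform limit of functions holomorphic on neighbourhoods of $K$; finally, polynomial convexity of the product and the Oka--Weil theorem convert these into polynomials. Your proof replaces the appeal to \cite{FGMN} and Oka--Weil by a slicing argument: you view $f$ as an $A(K_2)$-valued element of the disc-algebra-type space over $K_1$ (your verification of continuity, of the values lying in $A(K_2)$ via regularity of $K_1$, and of holomorphy via Morera, Dunford, and density of $\mathrm{int}(K_2)$ in $K_2$ is all correct), apply a Banach-valued Mergelyan theorem on $K_1$, and then scalar Mergelyan on $K_2$ coefficient by coefficient. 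What each approach buys: the paper's route is essentially a two-line deduction once the product Mergelyan theorem of \cite{FGMN} is available, and it generalizes to settings where a neighbourhood-approximation theorem plus Oka--Weil applies; your route is more self-contained and makes visible exactly where regularity of $K_1$ and $K_2$ and connectedness of the complements enter, but it shifts all the weight onto the vector-valued Mergelyan theorem, which is not a formal corollary of the scalar statement. You correctly identify this as the crux, and your justification (Vitushkin's scheme uses only linear operators with scalar kernels, so the estimates survive with $\|\cdot\|_B$ in place of $|\cdot|$) is the standard and correct way to establish it; in a final write-up this step would need either that argument carried out in detail or a citation to a published Banach-valued Mergelyan theorem, since it is the one ingredient a referee would not accept as folklore.
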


\begin{proof}
Since $\C\setminus K_i$ are connected, it follows from Mergelyan's theorem that $P(K_i)=A_1(K_i)$ and consequently $R(K_i)=A_1(K_i),$ where $R(K_i)$ denotes uniform limits on $K_i$ of rational functions having no poles on $K_i.$ Taking into account that what we have denoted $A_1(K)$ is usually denoted by $A(K)$ in the litterature, it follows from Corollary 3.6 and Theorem 4.6 in \cite{FGMN} that $$\overline{\mathcal O}(K)=A_1(K),$$
where $\overline{\mathcal O}(K)$ denotes uniform limits of functions holomorphic on $K.$ Since $K_1$ and $K_2$ have connected complements they are polynomially convex and hence their product  $K$ is also polynomially convex. That is, $K=\widehat K,$ where $\widehat K$ denotes the polynomially convex hull of $K.$ 
By the Oka-Weil Theorem 
$$\overline{\mathcal O}(K)\subset P(K).$$ 
Since $P(K)\subset A(K),$ for every compact set, the theorem follows. 

\end{proof}

\begin{theorem}[See e.g.\ Stout \cite{stout}, Thm 1.2.9]\label{Stout}
If $K\subset\Cn$ is compact then every character of $P(K)$ is of the form $f\mapsto \hat{f}(z)$ for 
a unique $z\in \widehat{K}.$
\end{theorem}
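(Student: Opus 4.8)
The plan is to show that a character of $P(K)$ is completely determined by its values on the $n$ coordinate functions, and that the resulting point necessarily lies in $\widehat K$. Let $\varphi\in\mathrm{Char}(P(K))$ and let $z_1,\dots,z_n$ denote the restrictions to $K$ of the coordinate projections; each $z_j$ is a polynomial and hence belongs to $P(K)$. Set $w_j:=\varphi(z_j)$ and $w:=(w_1,\dots,w_n)\in\Cn$. Since $\varphi$ is a unital algebra homomorphism, it respects sums and products and sends $1$ to $1$, so for every polynomial $p$ we obtain $\varphi(p)=p(\varphi(z_1),\dots,\varphi(z_n))=p(w)$; that is, $\varphi$ acts as evaluation at $w$ on the dense subalgebra of polynomials.

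The next step is to locate $w$ inside $\widehat K$. Here I would use that a character of a unital Banach algebra is contractive: since $\varphi(f)$ always lies in the spectrum $\sigma_{P(K)}(f)$ (otherwise $f-\varphi(f)\cdot 1$ would be invertible while being killed by $\varphi$), it cannot exceed the spectral radius, so $|\varphi(f)|\le\|f\|_K$ for every $f\in P(K)$, where $\|\cdot\|_K$ denotes the sup-norm over $K$. Applying this to polynomials gives $|p(w)|=|\varphi(p)|\le\sup_{z\in K}|p(z)|$ for all polynomials $p$, which is precisely the defining inequality for membership in the polynomially convex hull. Hence $w\in\widehat K$.

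Finally I would pass from polynomials to all of $P(K)$. By the very definition of $\widehat K$, polynomials attain their sup-norm over $\widehat K$ already on $K$, so $\|p\|_K=\|p\|_{\widehat K}$ for every polynomial $p$; consequently a sequence $p_k\to f$ converging uniformly on $K$ also converges uniformly on $\widehat K$ to a function $\hat f\in P(\widehat K)$ extending $f$, and the point $w\in\widehat K$ lies in its domain. Using the continuity of $\varphi$ together with $\varphi(p_k)=p_k(w)$ and $p_k(w)\to\hat f(w)$, I obtain $\varphi(f)=\hat f(w)$. Uniqueness of $w$ is then immediate: the coordinate functions separate the points of $\Cn$, and the values $w_j=\varphi(z_j)$ are forced by $\varphi$, so two distinct points of $\widehat K$ cannot induce the same character.

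The main obstacle is the passage to the hull, namely justifying that each $f\in P(K)$ extends canonically to a function $\hat f$ that is defined and continuous on all of $\widehat K$, so that the expression $\hat f(w)$ is meaningful for the point $w$, which may lie outside $K$. This rests entirely on the equality $\|p\|_K=\|p\|_{\widehat K}$ for polynomials, which identifies $P(K)$ with $P(\widehat K)$ as uniform algebras; once this identification is in place, the remaining steps are the routine verifications above.
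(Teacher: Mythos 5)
Your argument is correct and is essentially the standard proof of this result (the paper itself offers no proof, citing Stout's Theorem 1.2.9, whose proof proceeds exactly along these lines): a character is determined on polynomials by the point $w=(\varphi(z_1),\dots,\varphi(z_n))$, the contractivity of characters (via the spectral inclusion $\varphi(f)\in\sigma(f)$) forces $|p(w)|\le\|p\|_K$ and hence $w\in\widehat K$, and the identity $\|p\|_K=\|p\|_{\widehat K}$ lets one pass from polynomials to all of $P(K)$ by continuity. All the steps you flag as needing justification (contractivity, the canonical extension $\hat f$ to $\widehat K$, uniqueness via the coordinate functions) are handled correctly, so there is nothing to add.
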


\begin{corollary}
If $K\subset \C$ is a regular compact set with connected complement and $S=K\times K^*,$ then evey character of $A_1(S)$ is of the form $f\mapsto \hat f(z,w),$ for a unique $(z,w)\in S.$ 
\end{corollary}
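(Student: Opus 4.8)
The plan is to deduce this from Stout's theorem (Theorem \ref{Stout}) applied to $P(S)$, after identifying $A_1(S)$ with $P(S)$ via Theorem \ref{product} and checking that the polynomial hull $\widehat{S}$ coincides with $S$. The substantive content is entirely carried by these two quoted theorems, so the work is to verify their hypotheses for $S = K\times K^*$ and to pin the resulting evaluation point inside $S$.

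First I would establish that $S=K\times K^*$ meets the hypotheses of Theorem \ref{product} with $K_1=K$ and $K_2=K^*$. The set $K$ is a regular compact set with connected complement by assumption. For $K^*$, I would observe that complex conjugation $z\mapsto\bar z$ is a homeomorphism of $\C$ onto itself; it carries $K$ onto $K^*$, sends interiors to interiors and closures to closures, and maps $\C\setminus K$ homeomorphically onto $\C\setminus K^*$. Consequently $K^*$ is again regular (since $K^*=\overline{\mbox{int}(K^*)}$ follows from $K=\overline{\mbox{int}(K)}$), and $\C\setminus K^*$ is connected, being the homeomorphic image of the connected set $\C\setminus K$. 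Thus Theorem \ref{product} applies and yields $P(S)=A_1(S)$ as closed subalgebras of $C(S)$, hence as Banach algebras with the same characters.

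Next I would invoke Theorem \ref{Stout}. Since $A_1(S)=P(S)$, every character of $A_1(S)$ is a character of $P(S)$, hence of the form $f\mapsto\hat f(z,w)$ for a unique $(z,w)$ in the polynomially convex hull $\widehat{S}$. It then remains only to show $\widehat{S}=S$. Here I would use that a compact subset of $\C$ is polynomially convex exactly when its complement is connected, so both $K$ and $K^*$ are polynomially convex, and that a product of polynomially convex compacta is polynomially convex (both facts already invoked in the proof of Theorem \ref{product}). Hence $S=K\times K^*$ is polynomially convex, so $\widehat{S}=S$, and the unique point $(z,w)$ furnished by Stout's theorem lies in $S$, as required.

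The argument is short precisely because the deep steps are the cited theorems; I expect no real obstacle, only routine bookkeeping that $K^*$ inherits regularity and connectedness of complement from $K$. The one point to state carefully is the passage from ``$(z,w)\in\widehat{S}$'' to ``$(z,w)\in S$'', which is exactly where polynomial convexity of $S$ is needed: without it, Stout's theorem would only locate each character in the hull rather than in $S$ itself.
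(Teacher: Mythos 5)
Your proposal is correct and follows exactly the paper's own route: combine Theorem \ref{product} (giving $P(S)=A_1(S)$) with Theorem \ref{Stout} and the fact that $S=\widehat S$. The paper states this in one line; you merely spell out the routine verifications (that $K^*$ inherits regularity and connected complement from $K$, and that the product of polynomially convex sets is polynomially convex) that the paper leaves implicit.
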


\begin{proof}
This follows immediately from Theorems \ref{product} and \ref{Stout} since $S=\widehat S.$ 
\end{proof}

This completely characterizes the space Char$A_1(S)$ as the evaluations at points  of $S.$

In a more general setting, if $Y$ is a set and $T$ a topological space and $F$
a family of functions $Y\to T.$ The {\em weak topology}
on $Y$ induced by $F$ is the smallest (coarsest) topology, $\tau$, on $Y$
for which the functions of $F$ are continuous. Then $\tau$ is generated by (i.e.\ has neighbourhood basis
at each point that belongs to) the sets
\begin{equation}
\{f^{-1}(U)\colon f\in F,U\mbox{ open in }T\}, \quad f^{-1}(U):=\{y\in Y\colon f(y)\in U\}.
\end{equation}
It follows for   a net $\{x_\alpha\}_\alpha$ that $\lim_{\alpha} x_\alpha =x$ weakly if and only if $\lim_\alpha f(x_\alpha)=f(x)$ for each $f\in F.$ 
If $T$ is Hausdorff and $F$ separates the points of $Y$ then the weak topology is Hausdorff.

\begin{definition}
Let $Y$ be a complex Banach space. For each $f\in Y$ denote by $\hat{f}$ the function on the  dual $Y'$
defined by \begin{equation}\label{gelftrans}
\hat{f}(\phi)=\phi(f). 
\end{equation}
The weak-star topology on $Y'$ is the weak topology on $Y'$ induced by the family $\{ \hat{f}\colon f\in Y\}.$
\end{definition}

\begin{proposition}\label{characthausdorf}
The weak-star topology of the dual $A'$ of a Banach space $A$, is Hausdorff.
\end{proposition}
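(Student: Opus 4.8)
The plan is to prove that the weak-star topology on the dual $A'$ separates points, since for a topology induced by a family of functions into a Hausdorff space, separation of points by the family implies the induced topology is Hausdorff (this is precisely the general fact recorded just before the statement). Here the inducing family is $\{\hat f : f\in A\}$, where $\hat f(\phi)=\phi(f)$, and the target space is $\C$, which is Hausdorff. So the entire matter reduces to verifying that this family of evaluation functionals separates the points of $A'$.

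First I would take two distinct elements $\phi_1,\phi_2\in A'$ with $\phi_1\neq\phi_2$. By the very definition of equality of functions, $\phi_1\neq\phi_2$ means there exists some $f\in A$ with $\phi_1(f)\neq\phi_2(f)$. Rewriting this in terms of the Gelfand-type transforms from Equation (\ref{gelftrans}), this says exactly $\hat f(\phi_1)=\phi_1(f)\neq\phi_2(f)=\hat f(\phi_2)$. Thus the single function $\hat f$ in the inducing family already takes different values at $\phi_1$ and $\phi_2$, so the family $\{\hat f:f\in A\}$ separates the points of $A'$.

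With point-separation established, I would simply invoke the general principle stated in the excerpt: if $T$ is Hausdorff and $F$ separates the points of $Y$, then the weak topology on $Y$ induced by $F$ is Hausdorff. Applying this with $Y=A'$, $T=\C$, and $F=\{\hat f:f\in A\}$ immediately yields that the weak-star topology on $A'$ is Hausdorff.

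I do not expect any genuine obstacle here; the proof is essentially a tautological unwinding of definitions. The only point worth care is the direction of the argument: one must notice that distinctness of two functionals is defined by their disagreement on some element, which is exactly the statement that an evaluation functional separates them. No completeness or Banach-space structure is actually used beyond the fact that $A'$ is a set of linear functionals on $A$; the argument works for the dual of any normed space, or indeed any linear space in duality.
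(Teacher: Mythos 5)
Your proof is correct and follows essentially the same route as the paper's: both show that the family $\{\hat f : f\in A\}$ separates the points of $A'$ (since $\phi_1\neq\phi_2$ means they disagree on some $f$) and then invoke the general fact that a weak topology induced by a point-separating family into a Hausdorff space is Hausdorff. No differences worth noting.
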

\begin{proof}
Let $\phi_1,\phi_2\in A'$.
If $\phi_1\neq \phi_2$ there exists $f\in A$ such that $\phi_1(f)\neq \phi_2(A).$
Hence $\hat{f}(\phi_1)\neq \hat{f}(\phi_2)$ which implies that the functions $\{\hat{f}\colon f\in A\}$ separate the points of $A'$.
Since $\C$ is Hausdorff the fact that the functions $\{\hat{f}\colon f\in A\}$ separate the points of $A'$
is sufficient for $A'$ (with the weak-star topology) to be Hausdorff.
\end{proof}

\begin{definition}
The spectrum $\Sigma(A)$ of a generalized uniform algebra $A$ on a compact Hausdorff
space $X$ is the set of all characters of $A$ equipped with the relative weak-star topology, That is, 
since $\Sigma(A)\subset A^\prime,$ we endow $\Sigma(A)$ with the weak-star topology inherited from $A^\prime.$  
Thus, a net $\{\phi_\iota\}_\iota$ in $\Sigma(A)$ converges to $\phi\in \Sigma(A)$ if for each $f\in A$
the net $\{\phi_\iota (f)\}_\iota$ converges to $\phi(f).$
\end{definition}

A neighbourhood basis of $\phi\in \Sigma(A)$ is given by sets of the form
\begin{equation}
\{ \psi\in \Sigma(A)\colon \abs{\phi(f_k)-\psi(f_k)}<\epsilon, k=1,\ldots,r\},
\end{equation}
where $\epsilon$ ranges over $(0,\infty),$ $r$ ranges over $\Z_+$ and $f_j$ ranges over $A$.

\begin{proposition}
For a uniform algebra $A$  in the generalized sense on a compact Hausdorff
space $X,$
the spectrum $\Sigma(A)$  is a compact Hausdorff space.
\end{proposition}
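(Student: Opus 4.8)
The plan is to realize $\Sigma(A)$ as a weak-star closed subset of a weak-star compact ball in the dual $A'$ and to conclude by the Banach--Alaoglu theorem. The Hausdorff property is immediate: since $\Sigma(A)\subseteq A'$ carries the relative weak-star topology and, by Proposition \ref{characthausdorf}, the weak-star topology on $A'$ is Hausdorff, the subspace $\Sigma(A)$ is Hausdorff as well. Thus the entire content of the statement is compactness, and I would devote the argument to that.

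For compactness I would proceed in three steps. First, I would produce a uniform bound on the characters, i.e.\ a constant $C$ with $\|\varphi\|\le C$ for every $\varphi\in\Sigma(A)$. The starting point is that $\varphi(f)\in\sigma_A(f)$ for every $f\in A$: if $\lambda=\varphi(f)$ and $\lambda\cdot 1-f$ admitted an inverse $g$, then applying $\varphi$ to $(\lambda\cdot 1-f)\bullet g=1$ would give $(\lambda-\varphi(f))\varphi(g)=\varphi(1)=1$, i.e.\ $0=1$, a contradiction. Hence $\abs{\varphi(f)}\le r_A(f):=\sup\{\abs{\lambda}:\lambda\in\sigma_A(f)\}$. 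Since $\sigma_A(cf)=c\,\sigma_A(f)$ for $c\in\C$, the spectral radius is homogeneous, so $r_A(f)\le C\|f\|$ with $C:=\sup\{r_A(f):\|f\|\le 1\}$, once this supremum is known to be finite. Second, by Banach--Alaoglu the ball $B_C=\{\psi\in A':\|\psi\|\le C\}$ is weak-star compact, and by the first step $\Sigma(A)\subseteq B_C$. Third, I would verify that $\Sigma(A)$ is weak-star closed in $B_C$: if a net $\{\varphi_\iota\}\subseteq\Sigma(A)$ converges weak-star to $\psi\in B_C$, then $\psi(f)=\lim_\iota\varphi_\iota(f)$ for each $f$, so $\psi$ is linear, $\psi(1)=\lim_\iota\varphi_\iota(1)=1\neq 0$, and $\psi(f\bullet g)=\lim_\iota\varphi_\iota(f)\varphi_\iota(g)=\psi(f)\psi(g)$; together with $\abs{\psi(f)}\le C\|f\|$ this exhibits $\psi$ as a continuous, nontrivial, multiplicative functional, hence $\psi\in\Sigma(A)$. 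A weak-star closed subset of the weak-star compact set $B_C$ is compact, completing the proof.

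The main obstacle is precisely the finiteness of $C=\sup\{r_A(f):\|f\|\le1\}$. For an ordinary Banach algebra this is automatic, since submultiplicativity forces $r_A(f)\le\|f\|$ and hence $\|\varphi\|=1$; but here submultiplicativity fails (Example \ref{banachcounter}), and indeed characters of $\mathbf{PA}_q(K)$ can have norm strictly larger than $1$, so the naive ``characters lie in the unit ball'' argument is not available. To secure the bound I would invoke the structural description established earlier: for $f=\sum_{j=0}^{q-1}a_j\overline z^j$ in $\mathbf{PA}_q(K)$ one has $\sigma_{\mathbf{PA}_q(K)}(f)\subseteq a_0(K)$, so $r_A(f)\le\|a_0\|_K$, and the zeroth-component map $E_0\colon f\mapsto a_0$ is continuous, hence bounded, by Corollary \ref{ahernbrunakonsekvens}. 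Writing $\|E_0\|$ for its operator norm and using that evaluation on $A_1(K)$ has norm $1$, we obtain $r_A(f)\le\|a_0\|_K\le\|E_0\|\,\|f\|$, so $C\le\|E_0\|<\infty$. Once this uniform bound is in place, the Banach--Alaoglu argument above applies verbatim; the abstract statement is thus reduced to the single quantitative fact that the spectral radii are uniformly controlled on the unit sphere, which the concrete structure of these algebras supplies.
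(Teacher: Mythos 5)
Your overall strategy is the same as the paper's: Hausdorffness is inherited from the weak-star topology on $A'$ via Proposition \ref{characthausdorf}, and compactness is obtained by exhibiting $\Sigma(A)$ as a closed subset of a compact set of functionals --- the paper embeds $\Sigma(A)$ into a product of discs $\prod_{f\in A}\{\zeta:\abs{\zeta}\le \norm{\phi}\norm{f}_X\}$ and invokes Tychonoff, which is exactly the Banach--Alaoglu argument you spell out, and its verification that $\Phi(\Sigma(A))$ is closed is your third step. The one place where you genuinely add something is that you isolate the real issue: both arguments need a \emph{uniform} bound on $\norm{\varphi}$ over $\varphi\in\Sigma(A)$. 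The paper's product of discs has radius $\norm{\phi}\norm{f}_X$ with $\phi$ still varying, so it silently assumes such a bound; and, as you correctly observe, the usual Banach-algebra argument giving $\norm{\varphi}\le 1$ is unavailable because submultiplicativity fails (Example \ref{banachcounter}).

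As a proof of the proposition \emph{as stated}, however, your argument has a gap: the statement concerns an arbitrary uniform algebra in the generalized sense on a compact Hausdorff space $X$, while your uniform bound $C\le\norm{E_0}$ is extracted from the structure theory of the particular algebras $\mathbf{PA}_q(K)$ (the inclusion $\sigma_{\mathbf{PA}_q(K)}(f)\subseteq a_0(K)$ and the boundedness of the zeroth-component projection, which itself requires the closed graph theorem on top of Corollary \ref{ahernbrunakonsekvens}, not just continuity of the representation). Nothing in the definition of a generalized uniform algebra supplies such a bound: the multiplication $\bullet$ is not required to be continuous, so neither the estimate $r_A(f)\le C\norm{f}$ nor the continuity of a weak-star limit of characters is automatic. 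To prove the general statement one would need an additional hypothesis, for instance joint continuity of $\bullet$, which gives $\norm{f\bullet g}\le M\norm{f}\norm{g}$ and hence $\abs{\varphi(f)}=\abs{\varphi(f^{\bullet n})}^{1/n}\le\left(\norm{\varphi}M^{n-1}\norm{f}^n\right)^{1/n}\to M\norm{f}$; this hypothesis does hold for $\mathbf{PA}_q(K)$ by the boundedness of all the component projections. So your proof is complete for the algebras the paper actually studies, but not for the proposition in the generality in which it is stated --- a defect it shares with, and indeed makes visible in, the paper's own proof.
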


\begin{proof}
By Proposition \ref{characthausdorf} the space $A^\prime$ with the weak-star topology is Hausdorff, so the space of characters is also Hausdorff since a subspace of a Hausdorff space is always Hausdorff.  We must prove compactness.
Denote for each $f\in A$, by $\C^f$, a copy of $\C$ and denote by $\C^A$ the Cartesian product $\pi_{f\in A}\C^f.$ 
Set 
$$
	\Phi:\Sigma(A)\longrightarrow \C^A, \quad    \phi\longmapsto \Phi(\phi), 
	\quad \mbox{where} \quad  \Phi(\phi)_f = \phi(f).
$$ 
By the definition of the topologies
$\Phi$ is injective and continuous. Furthermore
$\Phi(\Sigma(A))$ is a subset of $\Pi_{f\in A} \{\zeta\colon \abs{\zeta}\leq\norm{\phi}\norm{f}_X\}$ which is compact since
by Tychonoff's theorem the product of compact sets is compact
in the product topology. 
Let $\{z_f:f\in A\}$ be a limit point of $\Phi(\Sigma(A))$ i.e.\ there exists a net $\{\phi_\alpha\}_\alpha$ in $\Sigma(A)$,
$\phi_\alpha(f)\to z_f$ for each $f\in A.$ Define $\varphi:A\to \C,$ $\varphi(f):=z_f.$ Then $\varphi(1)=1$ and $\varphi$ is $\C$-linear and multiplicative
so $\varphi\in \Sigma(f)$. Thus Ran$(\Phi)$ is closed in a compact space so it is itself a compact space.
If $\{\Phi(\phi_\alpha)\}_\alpha$ converges to $\Phi(\varphi),$ then $\phi_\alpha(f)\to \varphi(f)$ thus $\phi_\alpha\to \varphi$ in $\Sigma(A).$ 
Hence $\Phi^{-1}$ is continuous proving that $\Phi$ is a homeomorphism which in turn implies that
the inverse image $\Sigma(A)$ of $\Phi(\Sigma(A))$ is compact. This completes the proof.
\end{proof}

\bibliographystyle{amsplain}

\end{document}